\DeclareFontFamily{U} {cmr}{}
\DeclareFontShape{U}{cmr}{m}{n}{
	<-6> cmr5
	<6-7> cmr6
	<7-8> cmr7
	<8-9> cmr8
	<9-10> cmr9
	<10-12> cmr10
	<12-> cmr12}{}
\DeclareSymbolFont{Xcmr} {U} {cmr}{m}{n}
\DeclareMathSymbol{\Delta}{\mathord}{Xcmr}{'001}
\DeclareMathSymbol{\Upsilon}{\mathord}{Xcmr}{'007}
\DeclareMathSymbol{\Omega}{\mathord}{Xcmr}{'012}
\setlist[itemize]{topsep=0ex,itemsep=0ex,parsep=0.4ex}
\setlist[enumerate]{topsep=0ex,itemsep=0ex,parsep=0.4ex}
\declaretheorem[name = Theorem, numberwithin = section, style = plain]{theorem}
\declaretheorem[name = Corollary, numberlike = theorem, style = plain]{corollary}
\declaretheorem[name = Conjecture, numberlike = theorem, style = plain]{conjecture}
\declaretheorem[name = Definition, numberlike = theorem, style = definition]{definition}
\declaretheorem[name = Lemma, numberlike = theorem, style = plain]{lemma}
\DeclareFontFamily{U}{matha}{\hyphenchar\font45}
\DeclareFontShape{U}{matha}{m}{n}{
	<5> <6> <7> <8> <9> <10> gen * matha
	<10.95> matha10 <12> <14.4> <17.28> <20.74> <24.88> matha12
}{}
\DeclareSymbolFont{matha}{U}{matha}{m}{n}
\DeclareMathSymbol{\specialuparrow}{\mathrel}{matha}{"D2}
\DeclareMathSymbol{\specialrightarrow}{\mathrel}{matha}{"D1}
\renewcommand*{\backref}[1]{}
\renewcommand*{\backrefalt}[4]{
	\ifcase #1 Not cited.%
	\or $\specialuparrow$#2%
	\else $\specialuparrow$#2%
	\fi%
}
\renewcommand{\epsilon}{\varepsilon}
\renewcommand{\ge}{\geqslant}
\renewcommand{\le}{\leqslant}
\renewcommand{\geq}{\geqslant}
\renewcommand{\leq}{\leqslant}
\DeclarePairedDelimiter{\abs}{\lvert}{\rvert}
\DeclarePairedDelimiter{\floor}{\lfloor}{\rfloor}
\DeclarePairedDelimiter{\set}{\{}{\}}
\DeclareMathOperator{\diam}{diam}
\newcommand{\defn}[1]{\textcolor{Maroon}{\emph{#1}}}
\newcommand{\bE}{\mathbb{E}}
\newcommand{\bN}{\mathbb{N}}
\newcommand{\bP}{\mathbb{P}}
\newcommand{\bR}{\mathbb{R}}
\newcommand{\cO}{\mathcal{O}}
\newcommand{\cP}{\mathcal{P}}
\newcommand{\cG}{\mathcal{G}}
\title{Reconstructing a point set from a random subset of its pairwise distances}
\author{}
\date{}
\begin{document}

\author{Ant\'{o}nio Gir\~{a}o\footnotemark[2]
\qquad Freddie Illingworth\footnotemark[2]
\qquad Lukas Michel\footnotemark[2] \\
Emil Powierski\footnotemark[2]
\qquad Alex Scott\footnotemark[2]}

\maketitle

\begin{abstract}
	Let $V$ be a set of $n$ points on the real line. Suppose that each pairwise distance is known independently with probability $p$. How much of $V$ can be reconstructed up to isometry?

    We prove that $p = (\log n)/n$ is a sharp threshold for reconstructing all of $V$ which improves a result of Benjamini and Tzalik. This follows from a hitting time result for the random process where the pairwise distances are revealed one-by-one uniformly at random. We also show that $1/n$ is a weak threshold for reconstructing a linear proportion of $V$.
\end{abstract}

\renewcommand{\thefootnote}{\fnsymbol{footnote}} 

\footnotetext[0]{\emph{2020 MSC}: 05C80 (Random graphs)}

\footnotetext[2]{Mathematical Institute, University of Oxford, United Kingdom (\textsf{\{girao,illingworth,michel,powierski,\allowbreak scott\}@maths.ox.ac.uk}). Research of AG, FI, and AS supported by EPSRC grant EP/V007327/1.}

\renewcommand{\thefootnote}{\arabic{footnote}} 

\section{Introduction}

Let $V \subset \bR^1$ be a finite set of points on the real line and suppose that all we know about the points are the distances between some pairs $\cP \subset \binom{V}{2}$ of them. More precisely, labelling the points $v_1, \dotsc, v_n$, if the pair $v_i v_j \in \cP$, then the distance between $v_i$ and $v_j$ is given\footnote{The problem where the points are not identified and only the multiset of pairwise distances is given has also been studied, see \cite{ACKR1989,PRS2003}. This is not in general enough to reconstruct a point set, as $A+B$ and $A-B$ are not distinguishable.  However, sets in at most two dimensions are reconstructible from subsets of constant size.}. How much of $V$ can be reconstructed? Can all of $V$ be reconstructed? \defn{Reconstructing} a set $U \subseteq V$ means deducing the positions of the labelled points in $U$ up to isometry or, equivalently, deducing all pairwise distances of points in $U$. In this paper we consider the case where each pairwise distance is known independently with probability $p$, that is, $(V, \cP)$ is distributed as the Erd\H{o}s-Renyi binomial random graph $\cG(n, p)$.

Benjamini and Tzalik~\cite{BT2022} recently proved the following result.

\begin{theorem}[Benjamini and Tzalik~\cite{BT2022}]\label{thm:BTrand}
    Let $V$ be a set of $n$ points on the real line. There is a sufficiently large constant $C$ such that if the graph $G$ of known pairwise distances $(V, \cP)$ is distributed as $\cG(n, p)$ where $p = C \log(n)/n$, then the whole of $V$ can be reconstructed with high probability (whp).
\end{theorem}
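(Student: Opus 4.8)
The plan is to isolate a combinatorial property of $G$ that forces reconstructibility and then verify that $\cG(n, C\log(n)/n)$ has it whp. Call a nonempty $W \subsetneq V$ \emph{bad} for a point set $V = (x_i)$ if $G[W]$ is connected, every vertex of $W$ has at most one $G$-neighbour outside $W$, and there is a \emph{flip}: a map $y \colon V \to \bR$ with $y(i) = x_i$ for $i \notin W$ and $y(i) \ne x_i$ for $i \in W$ whose values are pairwise distinct and satisfy $\abs{y(i) - y(j)} = \abs{x_i - x_j}$ for every $ij \in E(G)$. I would first show that if $G$ has no bad set then every $V$ is reconstructible from $G$: if not, there is a labelled point set $V' = (x_i')$ not congruent to $V$ with $\abs{x_i' - x_j'} = \abs{x_i - x_j}$ on every edge of $G$; normalise so $x_1' = x_1$ and take $W := \set{i : x_i' \ne x_i}$ as small as possible over all such $V'$, so $W$ is nonempty and proper. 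For $i \in W$ and $ij \in E(G)$ with $j \notin W$, $\abs{x_i' - x_j} = \abs{x_i - x_j}$ and $x_i' \ne x_i$ force $x_j = \tfrac12(x_i + x_i')$; there is one such point, so $i$ has at most one neighbour outside $W$. Minimality makes $G[W]$ connected (restricting the flip to a component of $G[W]$ gives a smaller $W$), and $y = x'$ is a flip, so $W$ is bad. (Behind this are the two elementary reconstruction moves: three mutually adjacent points are rigid because the largest of their three distances is the sum of the other two, and a vertex with two neighbours $a,b$ in an already-reconstructed set is pinned down because $\set{x_a \pm d_a}$ and $\set{x_b \pm d_b}$ share at most one point.)

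Next, the probabilistic part: show whp $\cG(n, C\log(n)/n)$ has no bad set, splitting on $w := \abs{W}$. For $w \le n/2$ only the second defining property is needed: it forces at most $w$ edges between $W$ and $V \setminus W$, whereas that number is $\mathrm{Bin}(w(n-w), p)$ with mean at least $\tfrac12 wnp = \tfrac12 wC\log n$, so being at most $w$ is a constant-fraction deviation below the mean; a Chernoff bound gives probability $n^{-\Omega(wC)}$, and summing over the at most $n^w$ candidate sets $W$ and over $w \ge 1$ gives $o(1)$ for $C$ large. (This also kills the degenerate cases, e.g.\ a vertex of degree at most one.)

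The substantive case is $w > n/2$; write $U = V \setminus W$, $k = \abs{U}$. Here I would use two standard whp facts about $\cG(n, C\log(n)/n)$ for $C$ large: minimum degree $\Omega(C\log n)$, and edge-expansion (every set of size at most $n/2$ has $\Omega(C\log n)$ times its size many edges to its complement). Applying expansion to $U$ against "$\le w \le n$ edges leave $W$" gives $k = \cO(n/(C\log n))$, so $W$ is almost all of $V$, $G[W]$ still has minimum degree $\Omega(C\log n)$, and $e(U,W) \ge \Omega(kC\log n) > k$ forces some $u \in U$ to have $\ge 2$ neighbours in $W$. Now a flip is \emph{rigid}: on each edge $ij$ of the connected graph $G[W]$, $y(i) - y(j) = \pm(x_i - x_j)$, so $W$ splits into "translation pieces" on which $y$ is a single translation, glued along the sign-reversing edges by reflections whose centres (read at boundary vertices) lie in the tiny set $\set{x_u : u \in U}$. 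A single isometry of $W$ fixing $U$ is the identity (using the $u$ with $\ge 2$ neighbours in $W$, or directly if $k=1$), and by the minimum-degree bound the flip cannot even be \emph{close} to one translation or reflection (the $o(\log n)$-many exceptional vertices would each see at most one neighbour inside the giant piece). Hence the flip genuinely recombines $W$ into several comparably large pieces in distinct positions, which forces the disagreement set $D = \set{ij : \abs{y(i)-y(j)} \ne \abs{x_i-x_j}}$ to be large; being disjoint from $E(G)$, it survives with probability at most $(1-p)^{\abs{D}}$, and one finishes with a union bound over $W$ and over the (translation-free) combinatorial type of the flip.

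That last step is the main obstacle: making the union bound over flips of a nearly-spanning $W$ converge. A flip is determined up to a single translation parameter by the partition of $W$ into rigid pieces together with the gluing data, and one must play the number of such partitions off against the size of $D$ they create — uniformly over \emph{every} point set $V$, including highly structured ones like arithmetic progressions, where the abundance of coincidences among pairwise distances gives a would-be flip far more freedom (and is exactly why a cheaply-satisfiable condition like ``few edges leave $W$'' is not enough in this range). Extracting a clean bound here, presumably by showing that rigidity forces the admissible flips to be few and their disagreement sets large, is the crux; the reduction and the small-$W$ count above are routine by comparison.
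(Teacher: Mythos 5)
Your reduction (``no bad set $\Rightarrow$ reconstructible'') and your treatment of $\abs{W} \le n/2$ are fine, modulo the technicality that the restriction of a flip to one component of $G[W]$ may create coincident values, so you should drop the distinctness requirement from the definition of a flip. But the proposal has a genuine gap exactly where you say it does: the case of a nearly-spanning bad set $W$. There you need to rule out \emph{every} alternative embedding $y$ of $W$, uniformly over every point set $V$, and a union bound over the combinatorial types of flips does not obviously converge. The difficulty is not just bookkeeping: for structured $V$ (an arithmetic progression, say) there are genuinely many sign-patterns $(\epsilon_e)_{e \in E(G[W])}$ that are consistent on every edge of $G[W]$, so you cannot hope to show that the disagreement set $D$ of a typical flip is large and then pay for the flip with $(1-p)^{\abs{D}}$; you would first have to bound the number of admissible flips, which is essentially the original problem. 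Your rigidity sketch (translation pieces glued by reflections) also overstates what is forced: sign changes can occur on edges internal to $W$, with reflection centres unrelated to $\set{x_u : u \in U}$, so the flip is not controlled by the small set $U$ in the way you suggest. As written, the argument does not close.

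The paper never takes a union bound over global alternative embeddings; it argues locally and then bootstraps, and this is the idea you are missing. For a cycle $v_1 \dotsm v_k$ with $k \le 0.9\log n$, the only obstruction to recovering all pairwise distances among its vertices from its $k$ edge lengths is a relation $\sum_i \gamma_i d_i = 0$ with $\gamma \in \set{-1,0,1}^{k-1}$ nonzero, and a first-moment count over random $k$-tuples (\cref{lem:unrecon}) shows this happens for at most an $n^{-0.01}$ fraction of $k$-tuples --- uniformly over all $V$, including arithmetic progressions. Since already at $p = 42/n$ a positive proportion of pairs lie on short cycles, this yields $\Omega(n^2)$ deducible pairwise distances, at which point the extremal theorem of Benjamini and Tzalik (\cref{thm:BT}) produces a reconstructible set $R$ of size $\Omega(n)$ with no further probabilistic work. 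One then finishes with the move you mention only in passing: a point with two known distances into an already-reconstructed set is pinned down, so sprinkling edges up to $p = C\log n / n$ (where the minimum degree is large) absorbs all remaining points into $R$. If you want to salvage your approach, you would need to replace the global flip analysis for large $W$ by some such local-to-global mechanism; the short-cycle counting plus \cref{thm:BT} is precisely a way of doing that.
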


We strengthen this result. Our first result identifies two important landmarks in the evolution of the largest reconstructible set as $p$ increases.
We improve on \cref{thm:BTrand} by proving a sharp threshold result for reconstructing the whole of $V$, as well as identifying the threshold for reconstructing a positive fraction of $V$.
\begin{theorem}\label{thm:linear}
    Let $V$ be a set of $n$ points on the real line. Suppose the graph $G$ of known pairwise distances $(V, \cP)$ is distributed as $\cG(n, p)$. Then the following hold whp.
    \begin{enumerate}[label = \alph{*}., ref = \alph{*}]
        \item If $p \geq 42/n$, then there is a reconstructible set of size $\Omega(n)$. \label{part:linear}
        \item  If $p n \to \infty$, then there is a reconstructible set of size $(1 - o(1))n$. \label{part:almostall}
        \item If $p \leq \frac{\log n + \log\log n - \omega(1)}{n}$, then it is not possible to reconstruct the whole of $V$. \label{part:notall}
        \item If $p \geq \frac{\log n + \log\log n + \omega(1)}{n}$, then the whole of $V$ can be reconstructed. \label{part:all}
    \end{enumerate}
\end{theorem}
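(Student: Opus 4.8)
The plan is to reduce reconstruction to a purely combinatorial condition on $G$ and the point set, prove parts (a)--(d) by matching first-moment / construction arguments for the relevant substructure, and deduce the threshold statements from a hitting-time statement for the edge-exposure process. Write $x_v\in\bR$ for the (unknown) coordinate of $v\in V$, so the data is the list of $\abs{x_u-x_v}$ over $uv\in\cP$. A configuration \emph{consistent} with the data is a map $x'\colon V\to\bR$ with $\abs{x'_u-x'_v}=\abs{x_u-x_v}$ for all $uv\in\cP$; writing $x'=x+2w$, consistency says exactly that for every edge $uv$ of $G$ either $w_u=w_v$ or $w_u+x_u=w_v+x_v$. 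Call such a $w$ a \emph{fold}, trivial if $w$ is constant (then $x'=x$) or $w+x$ is constant (then $x'$ is the reflection of $x$). Since two maps $V\to\bR$ have the same pairwise distances iff one is a translate or reflection of the other, \emph{all of $V$ is reconstructible precisely when $G$ is connected and there is no non-trivial fold}. A vertex of degree $\le1$ always yields a non-trivial fold (put $w=0$ off it and a suitable nonzero value on it), recovering the elementary necessity of minimum degree $\ge2$. The level sets of $w$ (``rows'') and of $w+x$ (``columns'') place $V$ in an $s\times t$ grid with at most one point per cell, every edge of $G$ inside a row or a column, and the coordinates forced by the two partitions; one may pass to a minimal fold, in which every row meets at least two other rows by an edge and likewise for columns, so $s,t\ge3$ unless the fold is of the simplest ``cut'' type $s=2$: a proper nonempty $S\subsetneq V$ and a real $\delta$ with every $S$--$(V\setminus S)$ edge joining two points at signed distance exactly $\delta$.

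\textbf{Parts (c), (a), (b).} For part (c), when $p\le(\log n+\log\log n-\omega(1))/n$ the graph $\cG(n,p)$ whp has a vertex of degree $\le1$, and (for distinct points and $n\ge3$) its two candidate positions give conflicting distances to some third point, so $V$ is not reconstructible. For parts (a) and (b) it suffices to find inside $G$ a connected induced subgraph $H$ on the required number of vertices with no non-trivial fold, since then $V(H)$ is reconstructible from $G[H]$. One route is to grow such a set greedily: a triangle is reconstructible, and if $U$ is reconstructible then so is $U\cup\{v\}$ whenever $v$ has two neighbours in $U$ (the three relevant distances are then all known), so the reachable set is governed by $2$-neighbour bootstrap percolation and one must show it has size $\Omega(n)$ when $p\ge42/n$ and $(1-o(1))n$ when $pn\to\infty$. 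More robustly, one produces $H$ with edge-expansion bounded below, which rules out cut folds (a cut fold makes the boundary of $S$ a sub-matching, contradicting expansion on the smaller side) and, via the reduction to minimal folds, the higher ones; for part (a) such an $H$ of linear order sits inside the giant component of $\cG(n,42/n)$ (the constant $42$ being comfortably above the relevant threshold), and for part (b) one takes the $2$-core, which has $(1-o(1))n$ vertices, and discards the $o(n)$ vertices that could carry a small fold.

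\textbf{Part (d): the main work.} We must show that for every point set $V$, whp $\cG(n,p)$ with $p\ge(\log n+\log\log n+\omega(1))/n$ has no non-trivial fold; since such a graph is also whp connected and $2$-edge-connected, the characterisation then gives reconstructibility. The argument is a first-moment bound over folds, organised by the size $k$ of the smaller side of a cut fold (and, for general folds, by the grid dimensions $s,t$). The crucial structural input is $2$-edge-connectedness: it forces $S$ to have at least two boundary edges, which pins the shift $\delta$ and thereby kills the otherwise-fatal factor counting the choices of $\delta$; what survives is, for each $S$ with $\abs S=k$, a probability of order $p^2(1-p)^{k(n-k)}$ times only a modest polynomial factor. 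For $k=O(1)$ this amounts to counting adjacent low-degree vertices with a coincidence of consecutive gaps; for $k=\Theta(n)$ the entropy $\binom nk$ is overwhelmed by $(1-p)^{\Theta(n^2)}$; the delicate regime is $k$ intermediate, where the exponent $k\log n-pk(n-k)\approx-k(\log\log n+\omega(1))$ wins only barely and one must carefully weigh the correction $\approx k^2(\log n)/n$ (from $n-k$ versus $n$) against $\binom nk$ and $k!$. I expect this intermediate range, together with the parallel bookkeeping for genuine ($s,t\ge3$) folds --- where the grid embedding and the coordinate-compatibility relations severely limit how many fold shapes an adversarial point set can realise --- to be the main obstacle, and the point at which the exact threshold, rather than $C\log n/n$, is needed.

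\textbf{Hitting time.} Finally one upgrades to the hitting-time statement for the process revealing distances one at a time: in the relevant window the only surviving obstruction is a vertex of degree $\le1$, so whp the first moment the graph becomes reconstructible coincides with the first moment its minimum degree reaches $2$; parts (c) and (d) are the $\cG(n,p)$ consequences of this.
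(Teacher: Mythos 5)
Your fold formalism is fine as far as it goes, but it omits the requirement that a consistent configuration must place the $n$ labelled points at \emph{distinct} positions, and this breaks part (c) and your closing hitting-time claim. A degree-one vertex $u$ with neighbour $v$ does give a non-trivial fold, but the folded configuration puts $u$ at $2v-u$; if that position is already occupied by a point of $V$ (a \emph{secure} pair, in the paper's terminology), the folded configuration is not a valid embedding and $u$ is in fact forced. So minimum degree $\ge 2$ is \emph{not} necessary for reconstructibility --- for $V=\{1,\dotsc,n\}$ there are $\Omega(n^2)$ secure pairs and \cref{thm:hitting} shows the hitting time whp strictly precedes minimum degree two. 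Your proof of (c) therefore has a real gap: a single vertex of degree one does not certify non-reconstructibility. The paper's argument (\cref{thm:arbitrarily}) needs the fact that at $p\le(\log n+\log\log n-\omega(1))/n$ there are whp $\omega(1)$ vertices of degree $\le 1$, bounds the number of secure ordered pairs by $n^2/2$, and exposes the neighbours of the degree-one vertices sequentially to find whp at least one insecure one.

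Parts (a) and (d) are also not established. Two-neighbour bootstrap percolation started from a single triangle dies immediately at $p=42/n$: the expected number of vertices with two neighbours in a fixed triple is $3np^2=O(1/n)$. The expansion alternative leaves unproved both that $\cG(n,42/n)$ contains a linear-order subgraph in which \emph{every} cut beats a matching, and that expansion rules out the general $s,t\ge 3$ folds (between two rows the edges form a matching, but a single row's boundary is a union of up to $s-1$ matchings, so the reduction you gesture at is not immediate). The paper instead deduces $\Omega(n^2)$ pairwise distances from cycles of length at most $0.9\log n$ (for most such tuples the sign pattern in $\sum\epsilon_i d_i=d_k$ is unique, by a union bound over $\{-1,0,1\}$-dependencies among the gaps) and then invokes the extremal theorem of Benjamini and Tzalik (\cref{thm:BT}) as a black box; some extremal input of this kind seems hard to avoid. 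For (d) you explicitly leave the intermediate-$k$ and higher-fold cases of the first-moment computation open; the paper avoids that computation entirely by first reconstructing a set $R$ of size $0.9n$ at $p=O(1/n)$ and then sprinkling so that at $p\ge(\log n+\log\log n+\omega(1))/n$ every remaining vertex gains two neighbours in (an augmentation of) $R$. Your part (b) sketch is the closest to correct, though the paper again proceeds by sprinkling onto the linear set from (a) rather than via the $2$-core.
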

These results are best possible up to the constant factor in part~\ref{part:linear}\footnote{We have not optimised the value 42 in part~\ref{part:linear}: a short calculation shows the best our methods could give is 9, while we believe the correct threshold is $1 + \epsilon$ (see \cref{sec:openproblems} for further discussion).}. Indeed, when $p = c/n$ for constant $c < 1$, the largest connected component in $G$ has size $\cO(\log n)$. When $p = c/n$ for constant $c > 1$, whp the largest component of $G$ is a giant component of size at most $dn$ for some constant $d < 1$.  

The final two parts of \cref{thm:linear} follow from a stronger hitting time result. Suppose that the pairwise distances are revealed one-by-one in a random order: in other words, the graph of known pairwise distances follows the random graph process $(G_t \colon 0 \le t \le \binom{n}{2})$. We will prove that whp $V$ is reconstructible when $G_t$ has minimum degree at least two. However, $V$ may also be reconstructible slightly before this. If some point $u$ is incident to exactly one revealed distance, then $u$ has two possible positions relative to its neighbour but it may be the case that one of these possible positions is already occupied and so in fact $u$ can be reconstructed. To this end, we say an ordered pair of distinct points $(u, v)$ is \defn{secure} if $w = 2v - u$, the reflection of $u$ over $v$, is also in $V$.

\begin{theorem}\label{thm:hitting}
    Let $V$ be a set of $n$ points on the real line. Suppose the distances between pairs of points in $V$ are revealed one-by-one in a uniformly random order. Then whp $V$ is reconstructible exactly at the first time that both the following hold.
    \begin{itemize}
        \item Every point is incident to at least one revealed distance.
        \item If a point $u$ is incident to exactly one revealed distance, which is to point $v$, then $(u, v)$ is secure.
    \end{itemize}
\end{theorem}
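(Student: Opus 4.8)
I would separate the easy \emph{necessity} direction from the substantial \emph{sufficiency} direction, writing $\tau_1\le\tau\le\tau_2$ for the hitting times of, respectively, minimum degree $1$, the two displayed conditions, and minimum degree $2$. For necessity, note first that once minimum degree $1$ is reached the two conditions are monotone (revealing an edge can only remove a vertex from the set of degree-$1$ vertices), so $\tau$ is well defined. Now fix $t<\tau$, so some condition fails at time $t$, and exhibit a second point set consistent with the revealed distances. If $u$ is isolated in $G_t$, place $u$ far from a reconstruction of $V\setminus\{u\}$. If instead every point has a revealed distance but some $u$ has a unique one, to $v$, with $(u,v)$ not secure, replace $u$ by $w=2v-u\notin V$: this preserves the single distance at $u$, keeps all points distinct, and since $n\ge3$ the positions of $V\setminus\{u\}$ pin the global isometry, so the new configuration is not isometric to $V$. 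Hence $V$ is not reconstructible before $\tau$.

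For sufficiency I would first record the structure of $G_\tau$. Since $\tau\ge\tau_1$, the Bollob\'as--Thomason hitting-time theorem gives that $G_\tau$ is connected whp; and since $\tau\le\tau_2$, the usual first-moment estimates at density $pn=\log n+O(\log\log n)$ apply, so whp the set $B$ of degree-$1$ vertices at time $\tau$ has size $O(\log n)$, no two of them are ``close'' (in particular no degree-$1$ vertex is adjacent to a vertex of degree $\le2$, and every secure-mate $2v-u$ of a vertex $u\in B$ has degree $\ge2$), and $G_\tau-B$ has minimum degree $\ge2$. The crux is then a rigidity statement: whp any point set $V'$ consistent with the revealed distances agrees with $V$ on $V\setminus B$ up to a global isometry. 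Granting this, each $u\in B$ has its single revealed distance to some $v\in V\setminus B$, so $v'_u\in\{v+|u-v|,\,v-|u-v|\}=\{u,\,2v-u\}$; but $2v-u$ lies in $V\setminus B$ and so already occupies its true position in $V'$, forcing $v'_u=u$, i.e.\ $V'=V$ up to isometry.

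The rigidity statement is where essentially all the work lies, and it is the main obstacle precisely because $V$ is \emph{arbitrary} and may have many coincidences (think of an arithmetic progression), so genericity is unavailable. The clean reformulation is: consistent configurations $V'$ correspond exactly to functions $c\colon V\to\bR$ with $c_x-c_y\in\{0,\,v_x-v_y\}$ for every edge $xy$ of $G_\tau$ (set $v'_w=v_w-2c_w$), the two ``trivial'' solutions $c\equiv\text{const}$ and $c-v\equiv\text{const}$ being $V$ and its reflection. Equivalently, the level sets of $c$ partition $V$ so that between any two parts all cross-edges of $G_\tau$ have the \emph{same} signed length. So I must show whp no such non-trivial partition exists (on $V\setminus B$), using distinctness of $V$ and the secure hypothesis only at the pendant stage above.

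I would rule out non-trivial partitions in two regimes. When the partition is balanced — two parts of size $\Omega(n)$ — those parts span $\Omega(n\log n)$ edges in $\cG(n,p)$, whereas at most $n$ edges of $V$ can share a prescribed signed length: contradiction. When the partition is unbalanced the binding case is when one part $L$ is almost all of $V$: then every edge leaving $L$ lies in the union of the ``shift-by-$\delta$'' graphs $H_\delta=\{xy\colon v_x-v_y=\delta\}$ over the (few) values $\delta=\lambda_L-\lambda_{L_j}$, a sparse, highly structured subgraph with $O(|V\setminus L|\cdot n)=o(n\log n)$ edges; so $G_\tau$ minus this subgraph would disconnect off the complement of $L$, which I would preclude by showing $G_\tau$ has no small vertex set whose entire edge-boundary sits inside a fixed sparse structured subgraph. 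This last point is the technical heart: the naive union bound over small sets is too weak, so I would expose the edges in two rounds (sprinkling) — use the first round to guarantee every small set already has large, ``unstructured'' boundary, and the second round to finish balanced and intermediate partitions — and combine it with the structural facts above to cover the very smallest set sizes. The intermediate partitions are then squeezed between the two regimes by a merging/peeling argument on the part-graph.
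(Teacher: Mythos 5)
Your overall architecture is sound and genuinely different from the paper's: you reduce sufficiency to a rigidity statement for $V$ restricted to the non-pendant vertices, reformulate consistent configurations as partitions of $V$ into level sets with constant signed edge-length between parts, and then try to rule out non-trivial partitions directly. The necessity direction and the final pendant-absorption step (using security plus the fact that the mate $2v-u$ has degree at least two) are correct and match the paper's. But the central rigidity claim is exactly where your proposal stops being a proof. You yourself flag that the naive union bound over small cuts fails, and it really does: at the hitting time there are vertices of degree $2$, so one cannot afford to insist that every small set has large boundary, and the "structured sparse subgraph" containing the boundary is not fixed in advance --- it is determined by the partition, which an adversary chooses after seeing the graph, and for highly structured point sets (an arithmetic progression, say) the shift graphs $H_\delta$ are as large as they can be. The useful structural fact you do not exploit is that between any two distinct parts the consistent edges form a \emph{matching} (a vertex of one part has all its neighbours in another part at a single position), which bounds the cross-edge count by $\sum_{i<j}\min(|P_i|,|P_j|)$; even with this, closing the unbalanced and many-medium-parts regimes requires a careful case analysis combined with "no two low-degree vertices are close" lemmas, and the sprinkling-plus-peeling plan you outline is a research programme rather than an argument. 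As written, the sufficiency direction has a genuine gap.

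It is worth contrasting this with how the paper sidesteps the issue entirely. Rather than proving that \emph{no} alternative configuration exists on $V\setminus B$, the paper constructs a reconstructible core: \cref{thm:linear}\ref{part:almostall} (proved via short cycles at density $O(1/n)$, where the "linear dependence among cycle lengths" events are rare enough to union-bound) gives a reconstructible set $R$ of size $0.9n$ already in $G_{\omega/n}$; two rounds of "add any vertex with two reconstructed neighbours" ($R'_p$, $R''_p$) then absorb every vertex of degree at least two by the hitting time (\cref{lem:gp123,lem:Rp}), and only then are the degree-one vertices handled by the secure/uncertain analysis. So the heavy lifting against adversarial point sets is done at constant average degree, where first-moment arguments suffice, and never requires classifying all consistent configurations of the near-spanning subgraph. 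If you want to complete your route, you would essentially have to reprove something at least as strong as that core-construction step inside your partition analysis.
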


It follows that if every point is incident to at least two revealed distances, then whp the whole of $V$ is reconstructible. In fact, this moment is whp the hitting time for full reconstructibility if and only if there are $o(n^2)$ secure pairs in $V$. This is because the unique distance incident to the final point with only one revealed distance is uniformly random amongst $\binom{V}{2}$. An example of a set with $\Omega(n^2)$ secure pairs is $V = \set{1, 2, \dotsc, n}$. In the other direction, we will show (\cref{thm:arbitrarily}) that if there are $\omega(1)$ points incident to only one revealed distance, then whp the whole of $V$ is not reconstructible. 

It is also possible to obtain an algorithmic hitting time result that does not need knowledge of the underlying point set $V$ to determine the point at which the entirety of $V$ is reconstructible.

\begin{theorem}\label{thm:hittingalg}
    There is an online algorithm with polynomial expected running time in $n$ that takes the revealed distances one-by-one and whp recognises the first time at which $V$ is reconstructible and reconstructs it.
\end{theorem}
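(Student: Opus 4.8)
The plan is to maintain, incrementally as each distance is revealed, a partition of $V$ into \emph{rigid clusters}: inclusion-maximal sets of points whose relative positions are already pinned down by the revealed distances, up to a single reflection of the whole cluster. A freshly revealed pair $uv$ is itself such a cluster $\{u,v\}$. The heart of the update is a merge rule: two clusters $C_1,C_2$ are combined once the revealed distances between them force their relative placement. Concretely, if the revealed edges from $C_1$ to $C_2$ contain two, say $u_1v_1$ and $u_2v_2$ with $u_i\in C_1$, $v_i\in C_2$, whose constraints $\lvert x_{u_i}-x_{v_i}\rvert=d_i$ admit — together with the already-forced internal coordinates of $C_1$ and $C_2$ — exactly two common solutions for the relative offset, necessarily a realisation and its global reflection, then $C_1\cup C_2$ becomes a rigid cluster; one checks soundness simply by verifying that this solution set has size two (a short calculation shows two distinct points determine at most one position for a third, so generically a single pair of cross edges already suffices, and a reflection of the cluster is the only further freedom). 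To this I would add a rule that absorbs short ``ears'': if singleton clusters $w_1,\dots,w_k$ lie on a path $v_0w_1\cdots w_kv_\ell$ with $v_0,v_\ell$ in a common cluster $C$ and $k$ small, enumerate the $2^{k+1}$ sign patterns of the path and, if exactly one is compatible with the forced value of $x_{v_\ell}-x_{v_0}$, absorb the $w_i$ into $C$. These rules are iterated to a fixed point after each revealed distance.

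Whenever a single cluster comes to contain all of $V$ except the points incident to exactly one revealed distance, the algorithm tries to finish. It fixes the (immaterial) global reflection arbitrarily; then, writing $u^{(1)},\dots,u^{(s)}$ for the remaining pendant points with neighbours $v^{(j)}$ and distances $d^{(j)}$, it enumerates the $2^{s}$ placements putting each $u^{(j)}$ at one of $v^{(j)}\pm d^{(j)}$ and keeps those that yield $n$ distinct points consistent with every revealed distance. If exactly one placement survives up to the global reflection, the algorithm declares $V$ reconstructible at this step and outputs it; in every other case (including a cluster that never grows to cover $V$ minus the pendants, e.g.\ because $G_t$ is disconnected) it reports that $V$ is not yet reconstructible. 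Soundness is immediate, and this is the key point making the online detection correct: every merge and every surviving placement is forced in \emph{every} realisation consistent with the revealed distances, so whenever the algorithm outputs a set it is the unique realisation up to isometry and hence $V$ is genuinely reconstructible; conversely, before the hitting time of \cref{thm:hitting} there is more than one realisation, so the algorithm cannot and does not output anything.

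It remains to show that the algorithm actually succeeds \emph{at} that hitting time, and in polynomial expected time. For the first part one argues that whp, at the first moment $T$ at which every point is incident to a revealed distance and every pendant is secure, the merge rules percolate: pairwise merges assemble essentially all of the $2$-core of $G_T$ — here one uses that whp all but $O(\log n)$ vertices have degree at least three, which prevents the ``cycle-like'' configurations on which pairwise merging would stall — the remaining $O(\log n)$ degree-two vertices form only short ears, which the ear rule absorbs, so the surviving cluster is exactly $V$ minus the pendants; finally, by the analysis underlying \cref{thm:hitting}, each secure pendant has exactly one of its two candidate positions already occupied, so precisely one placement survives and the finishing step fires. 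For the running time, each fixed-point computation is polynomial, there are at most $n$ merges per step and at most $\binom{n}{2}$ steps, and the only expensive operations are the ear and pendant brute-forces, of cost $2^{O(\ell)}$ and $2^{s}$ where $\ell$ is an ear length and $s$ the pendant count; since these are whp $O(\log n)$ and one can bound their exponential moments polynomially — so that rare instances with a long ear or many pendants contribute negligibly — the total expected running time is polynomial.

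The main obstacle I expect is the percolation claim: turning the abstract characterisation of reconstructibility in \cref{thm:hitting} into the statement that this concrete, purely local process reconstructs $V$ exactly at time $T$, and in particular ruling out global cycle-type obstructions by exploiting the near-regularity of $G_T$ away from its $O(\log n)$ low-degree vertices. Bounding the exponential moments of the ear lengths and the pendant count, which is what upgrades ``whp polynomial'' to ``polynomial in expectation'', is a secondary and more routine technical point.
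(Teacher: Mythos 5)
Your architecture --- incremental maintenance of rigid clusters with a two-cross-edge merge rule, an ear-absorption rule, and a final brute force over pendants --- is genuinely different from the paper's proof, which simply packages the existence arguments into an algorithm: collect the first $42n$ edges, enumerate all cycles of length at most $0.9\log n$, use the cycle-reconstructible ones to deduce $\Omega(n^2)$ pairwise distances, feed these into the extremal Benjamini--Tzalik result (\cref{thm:BT}) to obtain a linear-sized reconstructed set $R$, and thereafter grow $R$ greedily by adding any vertex with two neighbours in $R$, or one neighbour in $R$ whose reflected position is already occupied by a point of $R$. The soundness half of your argument is fine, and your treatment of pendants is morally the same as the paper's use of secure pairs.

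The gap is exactly where you flag it: the ``percolation claim'' that your local rules assemble essentially all of the $2$-core of $G_T$ is not proved, and it is not clear that it is true. Two concrete problems. First, getting off the ground: at densities near $(\log n)/n$ almost no edge lies in a constant-length cycle, so your initial clusters are size-two edge-clusters with at most one cross edge to almost every other cluster, and everything hinges on absorbing ears of length up to $\Theta(\log n)$, i.e.\ on an ear-decomposition-style argument for the $2$-core. Second, and more seriously, an ear is absorbed only when \emph{exactly one} sign pattern is compatible, and for an arbitrary (non-generic) point set the specific ears available in $G_T$ may fail this test: \cref{lem:unrecon} only bounds the failure rate over \emph{random} tuples, so a percolation through whichever cycles happen to be present can stall on non-cycle-reconstructible ears, and near-regularity of $G_T$ does not rule this out. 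The paper circumvents precisely this by averaging over the many short cycles through a typical pair (\cref{lem:bad,lem:nonded}), invoking \cref{thm:BT} as a black box to convert $\Omega(n^2)$ deduced distances into a linear reconstructible set, and then using a two-round exposure (\cref{lem:gp123,lem:Rp}) to show that sprinkled edges attach every remaining degree-$\geq 2$ vertex to that set by two edges. Some substitute for that extremal step is what is missing from your proof. (Your expected-time bound for the $2^{s}$ pendant enumeration is also only asserted, but that is repairable, e.g.\ by resolving pendants one at a time as the paper does.)
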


Reconstructibility is strongly related to graph rigidity, which is concerned with generic embeddings of graphs in $\bR^d$. An embedding of a set of vertices $V$ in $\bR^d$ is \defn{generic} if the set of $d \abs{V}$ coordinates of the vertices is algebraically independent over the rationals. 
A graph is \defn{globally rigid in $\bR^d$} if it has some generic embedding in $\bR^d$ which is reconstructible from its edge lengths.
In fact, a graph is globally rigid if and only if all of its generic embeddings are reconstructible from their edge lengths~\cite{Connelly2005,GHT2010}.

The (global) rigidity of the random graph in $\bR^d$ has been extensively studied with work both on when the whole graph is (globally) rigid~\cite{JSS2007,KT2013,JT2022} and when it has a linear sized rigid component~\cite{KMT2011,BLM2018}. Lew, Nevo, Peled, and Raz~\cite{LNPR2022} recently gave a hitting time result: the random graph process in $\bR^d$ becomes globally rigid at exactly the moment it has minimum degree $d + 1$. This implies \cref{thm:hitting} for \emph{generic} embeddings since there are no secure pairs in generic embeddings. However, the restriction to generic embeddings in the definition of global rigidity is a significant weakening. For example, it is folklore (see~\cite[Thm.~63.2.7]{JW2017}) that a graph is globally rigid in $\bR$ if and only if it is 2-connected, while for reconstructing arbitrary point sets, the situation is rather different. Indeed, we show that there are graphs with arbitrarily high connectivity which can be embedded in $\bR$ so that their vertex sets cannot be reconstructed from their edge lengths. This disproves a conjecture of Benjamini and Tzalik~\cite{BT2022}.
\begin{theorem}\label{thm:connectivity}
    Let $k$ be a positive integer. There are $k$-connected graphs on arbitrarily many vertices which can be embedded in the real line in such a way that the largest reconstructible subset of vertices has size $k$.
\end{theorem}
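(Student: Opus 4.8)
The plan is to exhibit, for each $N\ge 2$, an explicit embedding in $\bR$ of the following ``chain of cliques'' $G$: take $N$ levels $L_1,\dots,L_N$, each $L_i$ a copy of $K_k$ on vertices $v_i^1,\dots,v_i^k$, and add a ``rail'' edge $v_i^jv_{i+1}^j$ for all $1\le i<N$ and $1\le j\le k$. This graph is $k$-connected: its minimum degree is $k$ (so $\kappa(G)\le k$), and deleting fewer than $k$ vertices leaves every level non-empty and leaves at least one surviving rail between each pair of consecutive levels, so the rest stays connected (so $\kappa(G)\ge k$). We embed $G$ ``like an accordion'' by placing $v_i^j$ at the point $C^i+j\in\bR$, where $C:=k+2$, and take the known distances to be exactly the edge lengths of this embedding. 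We will show that the largest reconstructible subset of this point set has size exactly $k$; since it has $Nk$ vertices, letting $N\to\infty$ proves the theorem.

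The lower bound is immediate: every pair of vertices inside a single level $L_i$ is joined by a known distance, so $L_i$ is reconstructible. For the upper bound, the key point is that for each $2\le m\le N$ the map $T_m$ that fixes $v_i^j$ for $i<m$ and sends $v_i^j\mapsto 2C^{m-1}-C^i+j$ for $i\ge m$ is an injective map that preserves every edge length of $G$: it reflects the ``tail'' $L_m\cup\dots\cup L_N$ one strand at a time, sending strand $j$ to its mirror image in the point $C^{m-1}+j$ (one checks all rung and rail lengths are preserved; and $C=k+2$ forces both the original point set and every $T_m$-image to consist of distinct points, and rules out the handful of coordinate coincidences used below). So every $T_m$-image is again a point set consistent with the known distances, and hence a set $U$ is not reconstructible whenever some $T_m$ sends the configuration on $U$ to a non-isometric one. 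Here we use the one-dimensional fact that a labelled configuration on a line is determined up to isometry by its pairwise distances, so any isometry taking the configuration on $U$ to its $T_m$-image acts on all of $U$ through a single global sign $\varepsilon=\pm1$.

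The heart of the matter is: \emph{if $U$ is reconstructible and contains two vertices on different strands, they lie on a common level.} Indeed, let $u=v_a^j$, $u'=v_{a'}^{j'}$ with $j\ne j'$ and $a\le a'$; suppose $a<a'$. Apply $T_{a+2}$ if $a'\ge a+2$ and $T_{a+1}$ if $a'=a+1$; in both cases $u$ is fixed while $u'$ moves to a genuinely different point. Then $\varepsilon=+1$ is impossible (it would leave $u'$ fixed), while $\varepsilon=-1$ forces, after comparing the two positions of $u'$, the equation $C^{a+1}-C^a=j'-j$ in the first case and $j=j'$ in the second, both impossible since $0<\abs{j-j'}\le k-1<C-1$. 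So $a=a'$. Since any third vertex of $U$ is on a strand different from at least one of $u,u'$, it too lies on that common level; hence a reconstructible $U$ lies entirely in one level or entirely in one strand. A single level has only $k$ vertices; and three vertices $v_{a_1}^j,v_{a_2}^j,v_{a_3}^j$ with $a_1<a_2<a_3$ on a single strand are not reconstructible, as seen by applying $T_{a_1+2}$ (valid because $a_3\ge a_1+2$): it fixes $v_{a_1}^j$ and moves $v_{a_3}^j$, and if $v_{a_2}^j$ also moves then $\varepsilon=+1$ is impossible ($v_{a_3}^j$ moved) and $\varepsilon=-1$ forces $C^{a_1}=C^{a_1+1}$, while if $a_2=a_1+1$ then $v_{a_1}^j$ and $v_{a_2}^j$ are two fixed vertices at distinct points, forcing $\varepsilon=+1$ and again contradicting that $v_{a_3}^j$ moved. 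So no $(k+1)$-set is reconstructible, and the largest reconstructible subset has size exactly $k$.

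I expect the main obstacle to be the upper bound — not the idea, which is clean (an accordion-folded chain of cliques keeps each clique rigid while the chain itself folds freely), but the bookkeeping: verifying that each $T_m$ preserves every edge length and remains injective, and then running the case analysis in the displayed claim and in the single-strand case with all the sign and coincidence checks (which is also where the exact value of $C$ gets pinned down). One must also be careful to use the one-dimensional characterisation of reconstructibility — configuration determined up to a single global reflection — since that is what legitimises reducing each case to ``$\varepsilon=+1$ or $\varepsilon=-1$''.
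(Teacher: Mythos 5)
Your proof is correct and takes essentially the same approach as the paper: both use the Cartesian product $K_k \,\square\, P_\ell$ embedded with exponentially growing rail lengths so that the chain of cliques can be ``folded'' at any level, forcing any reconstructible set into a single clique (or a short piece of one strand). You in fact verify more carefully than the paper does that each fold preserves all edge lengths and genuinely separates vertices from distinct cliques; the only (cosmetic) caveat, shared with the paper's own argument, is the degenerate case $k=1$, where any edge already yields a reconstructible pair of size $2$.
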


\begin{proof}
    Fix a positive integer $k$ and let $n = \ell k$ be any multiple of $k$. Let $C_1, \dotsc, C_{\ell}$ be $\ell$ disjoint $k$-cliques. Write $C_i = \set{a_{1}^{(i)}, \dotsc, a_{k}^{(i)}}$. Let $G$ be the graph obtained from the union of $C_1, \dotsc, C_{\ell}$ by adding matchings $\set{a_{1}^{(i)} a_{1}^{(i + 1)}, a_{2}^{(i)} a_{2}^{(i + 1)}, \dotsc, a_{k}^{(i)} a_{k}^{(i + 1)}}$ between $C_i$ and $C_{i + 1}$ for each $i = 1, \dotsc, \ell - 1$. Note that $G$ is the Cartesian product of $K_k$ and $P_{\ell}$ and is certainly $k$-connected.
    
    Embed the $C_i$ so that $a_1^{(i)}, \dotsc, a_k^{(i)}$ are $k$ consecutive integers (in that order), that is, $\abs{a_s^{(i)} - a_t^{(i)}} = \abs{s - t}$. We specify that the length of each edge $a_s^{(i)} a_s^{(i + 1)}$ is $k^i$. Certainly such an embedding is possible by placing each $a_s^{(i)}$ at $k^{i - 1} + k^{i - 2} + \dotsb + k + s$. Further, having placed $C_1, \dotsc, C_i$ at these points, there are multiple options for where to place $C_{i + 1}, \dotsc, C_{\ell}$: either to the right of $C_i$ or to the left of $C_1$. In particular, any vertex subset of size at least $k+1$ contains vertices from at least two distinct $C_i$ and is thus not reconstructible.
\end{proof}

The rest of the paper is organized as follows. We establish parts \ref{part:linear} and \ref{part:almostall} of \cref{thm:linear} in \cref{sec:linear}. The remainder of \cref{thm:linear} as well as \cref{thm:hitting,thm:hittingalg} are proved in \cref{sec:hitting}. We conclude in \cref{sec:openproblems} with some discussion of open problems.

Throughout we use standard asymptotic notation and assume that $n$ is sufficiently large.

\section{Reconstructing a linear proportion of the points} \label{sec:linear}

In this section, we prove parts \ref{part:linear} and \ref{part:almostall} of \cref{thm:linear}. Fix a set $V$ of $n$ points on the real line. Our method for reconstructing a linear portion of $V$ is via short cycles. We first show that, for most short cycles $C$ whose vertices are in $V$, if the edges of $C$ were all revealed distances, then the vertices of $C$ can be reconstructed up to isometry. This will follow from the fact that multiple possible embeddings of a short cycle on the real line would yield a small ``linear dependence'' among the pairwise distances of adjacent points on the cycle. We show in \cref{sec:recontuples} that this linear dependence can only occur for a small proportion of short cycles.

Then in \cref{sec:shortcycles} we assume that each pairwise distance is known independently with probability  $p = 42 / n$. We note that a random graph $G \sim \cG(n, 42 / n)$ will typically have $\Omega(n^2)$ pairs of vertices that are contained in a short cycle. Since most short cycles can be reconstructed, we can reconstruct the distances between $\Omega(n^2)$ pairs of points. This allows us to apply the following extremal result of Benjamini and Tzalik~\cite{BT2022} to reconstruct a linear proportion of all points and so prove part~\ref{part:linear}.

\begin{theorem}[Benjamini and Tzalik~\cite{BT2022}]\label{thm:BT}
    Let $V$ be a set of $n$ points on the real line. Suppose that the set of known pairwise distances $\cP$ has size greater than $40n^{3/2}$. Then there is a set of $\Omega(\abs{\cP}/n)$ points that can be reconstructed.
\end{theorem}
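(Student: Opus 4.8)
The strategy is to locate a large, richly connected substructure in the graph $G = (V, \cP)$ of revealed distances and to upgrade it to a reconstructible set using two elementary facts about points on a line: (i) the position of a point is determined by its distances to two distinct, already-located points; and (ii) if two reconstructible sets share at least two points, their union is reconstructible, since an isometry of $\bR$ is pinned by two points. Write $m = \abs{\cP}$. As $G$ has average degree $2m/n$, a standard peeling argument gives a subgraph $H$ with minimum degree $\delta \ge m/n$, and $\delta > 40\sqrt{n}$ by hypothesis. It then suffices to produce a reconstructible subset of $V(H)$ of size $\Omega(\delta) = \Omega(m/n)$.

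Fact (i) has two consequences I would use repeatedly. First, if a vertex set $U$ admits a $2$-elimination order — an edge, followed by vertices each joined to at least two earlier ones — then $U$ is reconstructible; in particular a vertex together with a connected patch of its neighbourhood, or an edge together with its common neighbours (a book), is reconstructible. Second, a cycle $C$ with revealed edge-lengths $d_0, \dotsc, d_{k-1}$ is reconstructible \emph{unless} some $\pm 1$-signing of its edges, other than the true one and its reversal, also sums to zero; such an alternative closing forces a nontrivial $\pm 1$-relation among a proper sub-collection of the $d_i$, and it moves only those vertices of $C$ on the arc that the relation spans. For a generic point set no such relation exists, but for special sets such as $\set{1, 2, \dotsc, n}$ they do, and controlling them is the heart of the matter.

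Having minimum degree $\delta$, $H$ contains a cycle $C$ of length at least $\delta + 1 = \Omega(m/n)$ (take a longest path and close it up), and in fact a whole structure of this size whose reconstructibility is forced by (i)–(ii): a long cycle together with enough well-spread chords, or a chain of overlapping reconstructible $C_4$'s and books glued along pairs of vertices. Both exist because $\delta \gg \sqrt{n}$ puts us well above the threshold for $C_4$'s, so short cycles through any fixed vertex abound. If the distances along such a structure admit no bad additive relation, it is reconstructible and we are done; the remaining work is to arrange this. Here I would exploit the abundance of candidate structures in a minimum-degree-$\delta$ graph together with a counting estimate showing that only a negligible fraction of them carry a bad relation: at each step where the structure is extended, a bad relation would pin the new vertex to $O(1)$ positions on the line, and each position is occupied by at most two points, so the "bad" extensions are vastly outnumbered. (This is exactly the kind of estimate carried out for short cycles in \cref{sec:recontuples}.) The constant $40$ and the exponent $3/2$ are tuned precisely so that $\delta \gg \sqrt{n}$, which is what makes the substructure long enough and what makes the count of good configurations dominate the count of bad ones. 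The main obstacle, and the step I expect to be most delicate, is exactly this taming of additive relations for worst-case point sets — showing that one can always route the structure around them while keeping its size $\Omega(m/n)$.
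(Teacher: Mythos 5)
This statement is quoted from Benjamini--Tzalik \cite{BT2022}; the paper you are reading does not prove it, so the only question is whether your sketch stands on its own. It does not: the step you yourself flag as ``most delicate'' --- taming the additive relations for a worst-case point set --- is the entire content of the theorem, and the heuristic you offer for it is wrong. You write that a bad relation ``pins the new vertex to $O(1)$ positions.'' It does not: a relation $d_s = \sum_{i<s}\gamma_i d_i$ with $\gamma \in \set{-1,0,1}^{s-1}$ allows up to $3^{s-1}$ values of $d_s$, hence up to $2\cdot 3^{s-1}$ bad positions, which exceeds $n$ as soon as $s \gtrsim \log_3 n$. This is exactly why the counting argument in \cref{sec:recontuples} is restricted to cycles of length at most $0.9\log n$; it cannot be run along a structure of length $\Omega(\delta)=\Omega(\sqrt n)$. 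Worse, that counting argument averages over tuples of points and is only useful when the structure present in the graph is (close to) a uniformly random one, as in $\cG(n,p)$. Here both the point set \emph{and} the graph $\cP$ are adversarial, so ``bad extensions are vastly outnumbered by good ones'' proves nothing: the adversary may reveal precisely the bad configurations. An extremal argument must exhibit a reconstructible structure in \emph{every} graph with $m>40n^{3/2}$ edges, not in most of them.

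The purely graph-theoretic side of your plan is also unsubstantiated. A bare long cycle (which min degree $\delta$ does give you) is useless for worst-case point sets such as $\set{1,\dotsc,n}$, where long cycles admit exponentially many valid signings. A $2$-elimination order would indeed bypass the signing problem entirely (each new vertex is doubly attached, hence uniquely placed), but you have not shown that a graph of minimum degree $\delta$ contains such an orderable subgraph on $\Omega(\delta)$ vertices --- triangle-free graphs of large minimum degree contain none on more than two vertices, so this requires a real argument exploiting $m \gg n^{3/2}$. Books do not suffice either: with $m \approx 40n^{3/2}$ the average codegree is $\Theta(m^2/n^3) = \Theta(1)$, so an edge plus its common neighbours typically has bounded size, far short of $\Omega(m/n)=\Omega(\sqrt n)$. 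In short, you have correctly identified the reduction (peel to minimum degree $m/n$, glue reconstructible pieces along two shared points) and correctly located the difficulty, but the proposal contains no mechanism that actually overcomes it.
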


Finally, to obtain part~\ref{part:almostall}, we start with this linear proportion of all points that we have already reconstructed. Then, by increasing $p$ and so sprinkling in some additional edges, we will find that for most points we will know at least two distances to the set that we have already reconstructed, and so we can reconstruct those points as well.

\subsection{Reconstructible tuples in embeddings}\label{sec:recontuples}

In this section we show that most short cycles, if their edges are revealed distances, can be reconstructed. To this end we make the following definition.

\begin{definition}
    A tuple $T = (v_1, v_2, \dotsc, v_k)$ of distinct points in $V$ is \defn{cycle-reconstructible} if, given just the pairwise distances $\abs{v_2 - v_1}$, \ldots, $\abs{v_k - v_{k-1}}$, and $\abs{v_1 - v_k}$, we can reconstruct $\{v_1, \dots, v_k\}$ up to isometry.
\end{definition}

Consider some $k$-tuple $T = (v_1, v_2, \dotsc, v_k)$ and let $d_1 = \abs{v_2 - v_1}$, \ldots, $d_{k - 1} = \abs{v_k - v_{k - 1}}$, and $d_k = \abs{v_1 - v_k}$. Certainly there are some $\epsilon_1, \dotsc, \epsilon_{k - 1} \in \set{-1, 1}$ such that
\begin{equation}\label{eq:dk}
    \sum_{i = 1}^{k - 1} \epsilon_i d_i = d_k.
\end{equation}
If $T$ is not cycle-reconstructible, then there are at least two $(\epsilon_1, \dotsc, \epsilon_{k - 1}) \in \set{-1, 1}^{k - 1}$ satisfying \eqref{eq:dk}. Subtracting two such expressions and halving the result shows that there is some non-zero vector $(\gamma_1, \dotsc, \gamma_{k - 1}) \in \set{-1, 0, 1}^{k - 1}$ with
\begin{equation}\label{eq:lindep}
    \sum_{i = 1}^{k - 1} \gamma_i d_i = 0.
\end{equation}
This allows us to bound the number of non-cycle-reconstructible $k$-tuples.

\begin{lemma}\label{lem:unrecon}
    Let $k \leq 0.9 \log n$. At most an $n^{-0.01}$ fraction of all $k$-tuples are not cycle-reconstructible.
\end{lemma}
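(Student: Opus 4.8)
The plan is to bound the number of non-cycle-reconstructible $k$-tuples by a union bound over the possible ``shapes'' of the linear dependence \eqref{eq:lindep}, counting the tuples of each shape by fixing all but one coordinate. The first and only substantive step is to upgrade the $\{-1,0,1\}$-relation among the \emph{absolute values} $d_i$ to a genuine linear relation among the points $v_j$ themselves. Suppose $T = (v_1, \dotsc, v_k)$ is not cycle-reconstructible and put $x_i = v_{i+1} - v_i$ for $1 \le i \le k - 1$; since the $v_j$ are distinct, each $x_i \ne 0$. By \eqref{eq:lindep} there is a non-zero $(\gamma_1, \dotsc, \gamma_{k-1}) \in \{-1,0,1\}^{k-1}$ with $\sum_i \gamma_i \abs{x_i} = 0$, and setting $\delta_i = \gamma_i \operatorname{sgn}(x_i)$ this becomes $\sum_{i=1}^{k-1} \delta_i x_i = 0$ with $\delta = (\delta_1, \dotsc, \delta_{k-1}) \in \{-1,0,1\}^{k-1}$ still non-zero (as $\gamma \ne 0$ and each $x_i \ne 0$). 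Expanding $x_i = v_{i+1} - v_i$ and collecting coefficients turns this into $\sum_{j=1}^{k} c_j v_j = 0$, where $c_1 = -\delta_1$, $c_k = \delta_{k-1}$, and $c_j = \delta_{j-1} - \delta_j$ for $1 < j < k$. A one-line induction (from $c_1 = 0$ we get $\delta_1 = 0$, then from $c_2 = 0$ we get $\delta_2 = 0$, and so on) shows that the $c_j$ are not all zero whenever $\delta \ne 0$.

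Next I would run the union bound. Every non-cycle-reconstructible $T$ satisfies $\sum_{j=1}^{k} c_j(\delta)\, v_j = 0$ for at least one of the at most $3^{k-1}$ non-zero vectors $\delta \in \{-1,0,1\}^{k-1}$. Fix such a $\delta$ and pick an index $j$ with $c_j(\delta) \ne 0$; then $v_j = -c_j^{-1}\sum_{l \ne j} c_l v_l$ is determined by the other $k-1$ coordinates, so at most $n^{k-1}$ tuples $(v_1, \dotsc, v_k) \in V^k$ (a fortiori, at most this many tuples of \emph{distinct} points) satisfy the relation for this $\delta$. Summing over $\delta$, there are at most $3^{k-1} n^{k-1} \le 3^k n^{k-1}$ non-cycle-reconstructible $k$-tuples.

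Finally I would divide by the number $n(n-1)\dotsm(n-k+1) \ge (n-k)^k$ of $k$-tuples of distinct points. Since $k \le 0.9\log n$ we have $(n/(n-k))^k \le e^{k^2/(n-k)} = 1 + o(1)$, so the fraction of non-cycle-reconstructible $k$-tuples is at most $(1+o(1))\,3^k/n \le (1+o(1))\, n^{0.9\log 3 - 1}$, which is below $n^{-0.01}$ for all large $n$ because $0.9\log 3 < 0.99$. This numerology is exactly why the statement reads $k \le 0.9\log n$ and $n^{-0.01}$, and is also where the slack sits. I do not expect a real obstacle here: the only place needing care is the translation in the first paragraph (that the resulting linear form $\sum_j c_j(\delta) v_j$ on $(v_1,\dotsc,v_k)$ is genuinely non-degenerate, so that the ``fix all but one coordinate'' count applies), together with routine attention to the elementary estimates at the end.
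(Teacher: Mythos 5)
Your proof is correct, and it reaches the bound by a genuinely different counting scheme than the paper's. The paper argues sequentially and probabilistically: it reveals the points $v_1, v_2, \dotsc$ one at a time, says that $v_{s+1}$ ``fails'' if $d_s = \sum_{i<s}\gamma_i d_i$ for some $(\gamma_i) \in \{-1,0,1\}^{s-1}$, and bounds $\bP(v_{s+1}\text{ fails}) \le 2\cdot 3^{s-1}/(n-s)$ because each of the $\le 3^{s-1}$ bad values of $d_s$ permits only the two positions $v_s \pm d_s$; a union bound over $s$ then gives $\sum_s 3^s/n \le \tfrac32 n^{0.9\log 3 - 1}$. This sidesteps the absolute values entirely by the ``two positions per bad distance'' trick. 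You instead convert the relation $\sum_i \gamma_i \abs{x_i} = 0$ into a genuinely non-degenerate linear form $\sum_j c_j(\delta) v_j = 0$ on the points themselves (via $\delta_i = \gamma_i\operatorname{sgn}(x_i)$, with the short induction showing $c \ne 0$), and then do a single deterministic union bound over the $\le 3^{k-1}$ sign vectors $\delta$, counting $\le n^{k-1}$ solution tuples per form. The two arguments land on essentially the same numerology ($3^k/n$ versus $\sum_s 3^s/n$), so neither is sharper; your version is self-contained linear algebra plus counting and makes explicit where the non-degeneracy lives, while the paper's sequential formulation is slightly leaner because it never needs to track the signs of the $x_i$ or verify that the collected coefficients $c_j$ are non-trivial. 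Your elementary estimates at the end (the $(n/(n-k))^k = 1+o(1)$ correction and $0.9\log 3 < 0.99$ with the natural logarithm) are all in order.
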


\begin{proof}
    Select a $k$-tuple $T = (v_1, v_2, \dotsc, v_k)$ of distinct points in $V$ uniformly at random. Let $d_1 = \abs{v_2 - v_1}$, \ldots, $d_{k - 1} = \abs{v_k - v_{k - 1}}$. We say that $v_{s + 1}$ \defn{fails} (for $1 \leq s \leq k - 1$) if there is some non-zero vector $(\gamma_1, \dotsc, \gamma_{s - 1}) \in \set{-1, 0, 1}^{s - 1}$ with $\sum_{i < s} \gamma_i d_i = d_s$. By the preceding discussion, if $T$ is not cycle-reconstructible, then some $v_{s + 1}$ fails.
    
    Given $v_1, \dotsc, v_s$ there are at most $3^{s - 1}$ values of $d_s = \abs{v_{s + 1} - v_s}$ for which there is a vector $(\gamma_1, \dotsc, \gamma_{s - 1}) \in \set{-1, 0, 1}^{s - 1}$ with $d_s = \sum_{i < s} \gamma_i d_i$. Each such $d_s$ corresponds to two possible values of $v_{s + 1}$ (namely $v_s \pm d_s$). In particular, as there are $n - s$ possible choices for $v_{s + 1}$,
    \begin{equation*}
        \bP(v_{s + 1} \textnormal{ fails}) \leq \frac{2 \cdot 3^{s - 1}}{n - s} \leq \frac{3^s}{n},
    \end{equation*}
    and so, taking a union bound,
    \begin{equation*}
        \bP(T \textnormal{ not cycle-reconstructible}) \leq \sum_{s \leq 0.9 \log n} \frac{3^s}{n} \leq \frac{3^{0.9 \log n + 1}}{2n} = \frac{3}{2} n^{0.9 \log 3 - 1} \leq n^{-0.01}. \qedhere
    \end{equation*}
\end{proof}

We say that a pair of points $u, v$ is \defn{$k$-bad} if greater than an $n^{-0.005}$ fraction of the $k$-tuples containing both $u$ and $v$ are not cycle-reconstructible. Given the preceding lemma, it follows that there are only few $k$-bad pairs of points.

\begin{lemma}\label{lem:bad}
    Let $k \leq 0.9 \log n$. At most an $n^{-0.005}$ fraction of all pairs are $k$-bad.
\end{lemma}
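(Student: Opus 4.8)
The plan is a short double-counting argument feeding into Markov's inequality, using \cref{lem:unrecon} as a black box.

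First I would record a symmetry fact: since the collection of all $k$-tuples of distinct points of $V$ is invariant under permuting $V$, every unordered pair $\set{u, v}$ of points lies in exactly the same number $t$ of $k$-tuples (choose the two slots for $u$ and $v$, then fill the remaining $k - 2$ slots from $V \setminus \set{u, v}$). Writing $N$ for the total number of $k$-tuples and counting pair--tuple incidences in two ways gives $\binom{n}{2} t = \binom{k}{2} N$.

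Next, let $R$ be the number of non-cycle-reconstructible $k$-tuples, so $R \le n^{-0.01} N$ by \cref{lem:unrecon}, and for an unordered pair $\set{u, v}$ let $b(u, v)$ count the non-cycle-reconstructible $k$-tuples containing both $u$ and $v$. Counting incidences between pairs and non-cycle-reconstructible $k$-tuples in two ways gives $\sum_{\set{u, v}} b(u, v) = \binom{k}{2} R \le \binom{k}{2} n^{-0.01} N = n^{-0.01} \binom{n}{2} t$.

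Finally, a pair $\set{u, v}$ is $k$-bad precisely when $b(u, v) > n^{-0.005} t$, so Markov's inequality bounds the number of $k$-bad pairs by
\[
    \frac{\sum_{\set{u, v}} b(u, v)}{n^{-0.005} t} \le \frac{n^{-0.01} \binom{n}{2} t}{n^{-0.005} t} = n^{-0.005} \binom{n}{2},
\]
which is an $n^{-0.005}$ fraction of all pairs, as required. I do not expect any genuine obstacle here: the only mild subtlety is the opening symmetry observation, which is what makes the per-pair fractions comparable and lets the global estimate of \cref{lem:unrecon} transfer to a statement about individual pairs.
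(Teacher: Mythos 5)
Your argument is correct and is essentially the paper's own proof: the same symmetry observation that every pair lies in the same number of $k$-tuples, the same double count of pair--tuple incidences against \cref{lem:unrecon}, and the same Markov-type step (the paper phrases it as Markov's inequality for a uniformly random pair, you phrase it as a deterministic counting bound, which is the same thing). No gaps.
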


\begin{proof}
    Note that every pair of points is in the same number of $k$-tuples -- call this common value $A$. Let $u, v$ be a uniformly random pair of points in $V$ and let $X$ be the number of non-cycle-reconstructible $k$-tuples containing both $u$ and $v$. We will double count the number of $(T, \set{u, v})$ where $T$ is a non-cycle-reconstructible $k$-tuple and the pair $u, v$ is contained in $T$. By \cref{lem:unrecon}, this number is at most 
    \begin{equation*}
        n^{-0.01} \cdot \text{number of $k$-tuples} \cdot \tbinom{k}{2} = n^{-0.01} \cdot A \tbinom{n}{2}.
    \end{equation*}
    On the other hand, the number is exactly equals $\bE(X) \cdot \binom{n}{2}$. Hence $\bE(X) \leq An^{-0.01}$. Thus, by Markov's inequality,
    \begin{equation*}
        \bP(u, v \textnormal{ is $k$-bad}) = \bP(X > A n^{-0.005}) \leq \frac{\bE(X)}{A n^{-0.005}} \leq n^{-0.005}. \qedhere
    \end{equation*}  
\end{proof}

If a pair of points $u, v$ is not $k$-bad for any $k \le 0.9 \log n$, then a random short cycle in $\cG(n,p)$ containing $u$ and $v$ is likely to allow us to reconstruct the distance between $u$ and $v$. This motivates the following definition.

\begin{definition}
    A pair of points $u, v$ is \defn{useless} if there is some $k \leq 0.9 \log n$ such that the pair $u, v$ is $k$-bad. Otherwise $u, v$ is \defn{useful}.
\end{definition}

Using the previous results, we can bound the number of useless pairs of points.

\begin{corollary}\label{cor:useless}
    There are at most $n^{2 - 0.004}$ useless pairs of points.
\end{corollary}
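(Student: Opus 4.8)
The plan is to derive this directly from \cref{lem:bad} by a union bound over the relevant values of $k$. A pair of points is useless precisely when it is $k$-bad for at least one $k$ with $3 \le k \le 0.9 \log n$ (a $k$-tuple lying on a cycle needs $k \ge 3$), so the set of useless pairs is the union of the sets of $k$-bad pairs over these at most $0.9 \log n$ values of $k$.

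First I would fix $k$ in this range and invoke \cref{lem:bad}: the number of $k$-bad pairs is at most $n^{-0.005} \binom{n}{2}$. Then I would sum over all admissible $k$, of which there are fewer than $\log n$, to conclude that the number of useless pairs is at most $(\log n) \cdot n^{-0.005} \binom{n}{2} \le (\log n) \cdot n^{-0.005} \cdot n^2$. Finally, since $\log n = o(n^{0.001})$, for $n$ sufficiently large this is at most $n^{2 - 0.004}$, as claimed.

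There is essentially no obstacle here; the only point requiring a word is that the extra factor of $\log n$ coming from the union bound is harmlessly absorbed, since we deliberately left a gap between the exponent $0.005$ appearing in \cref{lem:bad} and the exponent $0.004$ in the statement. (If one wanted to avoid even this slack, one could instead bound the number of useless pairs by a union bound that is handled term-by-term, but the crude estimate above already suffices.)
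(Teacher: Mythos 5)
Your proposal is correct and follows essentially the same argument as the paper: a union bound over the at most $0.9\log n$ admissible values of $k$, applying \cref{lem:bad} to each, with the $\log n$ factor absorbed by the slack between the exponents $0.005$ and $0.004$.
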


\begin{proof}
    By \cref{lem:bad}, the number of useless pairs of points is at most
    \begin{equation*}
        \sum_{k \leq 0.9 \log n} \textnormal{number of $k$-bad pairs} \leq (0.9 \log n) n^{2 - 0.005} \leq n^{2 - 0.004}. \qedhere
    \end{equation*}
\end{proof}

\subsection{Reconstructing from short cycles in \texorpdfstring{$\cG(n, p)$}{Gnp}}\label{sec:shortcycles}

Given the results from the previous section, it will be helpful to show that most pairs of points in $\cG(n,p)$ are contained in a short cycle. We will use the following straightforward result about random graphs.

\begin{lemma}\label{lem:shortcycles}
    A random graph $G \sim \cG(n, 42/n)$ contains whp $\Omega(n^2)$ pairs of vertices that are in a cycle of length at most $0.9 \log n$.
\end{lemma}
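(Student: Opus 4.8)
The plan is to show that whp a linear number of vertices each lie on a short cycle, and moreover that for a positive fraction of vertex \emph{pairs} one can find a short cycle through that pair. The cleanest route is a second-moment (or direct first-moment plus concentration) argument on the number of short cycles, but since we only need \emph{pairs in a short cycle}, it is enough to exhibit, for many pairs $u, v$, a short path between them avoiding the edge $uv$ together with the edge $uv$ itself — equivalently, to lower bound the number of pairs at distance $2$ or $3$ in $G$ that are also adjacent, or more robustly, to count pairs joined by two internally disjoint short paths. I would work with the constant $p = 42/n$ throughout; the exact value $42$ plays no role here beyond being a large enough constant that the giant component exists and is robustly connected.

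First I would recall the standard fact that for $p = c/n$ with $c > 1$ constant, whp $G \sim \cG(n, p)$ has a unique giant component $L$ with $\abs{L} = \Theta(n)$, and indeed $L$ contains $\Omega(n)$ vertices of degree at least $2$; more quantitatively, for $c = 42$ the giant is very dense in the sense that whp $\Omega(n)$ vertices lie on a cycle. Second, I would argue that within the giant, a random pair of vertices is joined by a path of length $O(\log n)$: the diameter of the giant component of $\cG(n, c/n)$ is whp $\Theta(\log n)$ with an explicit constant depending on $c$, and for $c = 42$ this constant is comfortably below $0.9$ times the natural base, so that whp \emph{every} pair within the $2$-core is joined by a path of length at most (say) $0.4 \log n$. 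Third — and this is where the cycle comes from — I would show that for $\Omega(n^2)$ pairs $u, v$ in the $2$-core there are \emph{two} internally vertex-disjoint paths between $u$ and $v$, each of length at most $0.45 \log n$; concatenating them gives a cycle of length at most $0.9 \log n$ through both $u$ and $v$. To get two disjoint short paths I would use that the $2$-core is whp $2$-connected on a linear-sized piece (or: delete one short path and observe the remaining graph still connects $u$ to $v$ within $O(\log n)$ steps because only $O(\log n)$ vertices were removed from a well-connected expander-like giant), invoking the standard sprinkling/expansion properties of $\cG(n, c/n)$ for large $c$.

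I expect the main obstacle to be the third step: controlling the \emph{length} of the second disjoint path after the first has been removed, uniformly over $\Omega(n^2)$ pairs. The naive union bound over all pairs and all short path-lengths costs at most $n^2 \cdot (0.9 \log n)$ events, which is fine provided each "no short cycle through $u, v$" event has probability $n^{-2 - \Omega(1)}$; establishing that tail requires either a careful breadth-first-search exploration argument (showing the BFS neighbourhoods of $u$ and $v$ each reach size $n^{0.6}$, say, within $0.4 \log n$ steps and then collide, and that this is robust to deleting one path) or an appeal to known sharp estimates on the number of vertices within distance $r$ in the sparse random graph. A convenient shortcut, if available, is to cite a result on the number of short cycles in $\cG(n, c/n)$ directly: the expected number of cycles of length exactly $\ell$ is $\frac{1}{2\ell}(n)_\ell (c/n)^\ell \approx c^\ell/(2\ell)$, so summing $\ell$ from $3$ to $0.9 \log n$ the expected total number of short cycles is already $\Omega(n^{0.9 \log 42 - 1})$... which is not yet $\Omega(n^2)$. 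Hence a pure cycle-count is insufficient and one genuinely needs the "many pairs, each on its own short cycle" formulation: each short cycle of length $\ell$ contributes $\binom{\ell}{2}$ pairs, and one must show the relevant cycles are spread out enough that $\Omega(n^2)$ \emph{distinct} pairs arise. I would therefore phrase the final count as: whp there is a set $S$ of $\Omega(n)$ vertices in the $2$-core such that every pair in $\binom{S}{2}$ lies on a common cycle of length at most $0.9 \log n$, which immediately yields the claimed $\Omega(n^2)$ pairs and slots directly into the argument of \cref{sec:shortcycles} via \cref{cor:useless} and \cref{thm:BT}.
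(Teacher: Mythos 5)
Your final formulation -- exhibit a set $S$ of $\Omega(n)$ vertices such that every pair in $\binom{S}{2}$ lies on a common cycle of length at most $0.9\log n$ -- is exactly the shape of the paper's argument, and your first two steps (linear-sized giant, logarithmic diameter with an explicit constant via Riordan--Wormald) are also what the paper uses. But your third step, which you yourself flag as the main obstacle, is a genuine gap rather than a routine verification. To close a cycle through $u$ and $v$ you need two \emph{internally disjoint} paths of controlled length, and neither of your suggested devices delivers this: $2$-connectivity of (a linear piece of) the $2$-core gives two disjoint paths with no control whatsoever on the length of the second one, and the ``delete the first path and find another short path'' argument requires a robust-diameter statement (diameter of the giant is stable under deleting an adversarial set of $O(\log n)$ vertices, uniformly over $\Omega(n^2)$ pairs) that is not an off-the-shelf fact and would itself need a careful BFS/sprinkling proof. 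As written, the proof does not go through.

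The missing idea in the paper is a simple way to get disjointness for free: partition $V$ into two halves $V_1, V_2$ and do a two-round exposure. Each of $G[V_1]$, $G[V_2]$ is distributed as $\cG(n/2, 21/(n/2))$, so each has a giant component $C_i$ of linear size and, by \cref{lem:diam}, diameter at most $0.44\log n$. Then reveal the crossing edges and let $S$ be the $\Omega(n)$ vertices of $C_1$ with a neighbour in $C_2$. For $u, v \in S$, the shortest $u$--$v$ path inside $C_1$, the two crossing edges $uw_u$, $vw_v$, and the shortest $w_u$--$w_v$ path inside $C_2$ concatenate to a genuine cycle of length at most $0.9\log n$, because the two paths live in disjoint vertex sets. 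This replaces your entire step 3. (Two smaller remarks: your parenthetical estimate that the expected number of short cycles is only $\Omega(n^{0.9\log 42 - 1})$ is off -- it is in fact polynomially larger than $n^2$ -- though your underlying point stands, namely that a raw cycle count says nothing about how many \emph{distinct} pairs are covered; and the relevant diameter constant must be computed for average degree $21$ on $n/2$ vertices, not $42$ on $n$, once one adopts the halving trick.)
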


Let $\diam(G)$ denote the largest distance between two vertices in the same component of $G$. We will use the following consequence of a (much more general) result by Riordan and Wormald~\cite[Thm.~1.1]{RW2010}. There are alternative elementary arguments that prove results similar to Lemma \ref{lem:shortcycles}, but we use this result for brevity.

\begin{lemma}\label{lem:diam}
A random graph $G \sim \cG(n, 21/n)$ satisfies whp $\diam(G)\leq 0.44 \log n$.
\end{lemma}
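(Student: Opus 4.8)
The plan is to deduce this directly from the Riordan--Wormald formula for the diameter of a sparse random graph. Specialised to constant average degree, \cite[Thm.~1.1]{RW2010} gives that for a fixed constant $c > 1$ the (unique) giant component of $\cG(n, c/n)$ has diameter $(1 + o(1)) D(c) \log n$ whp, where
\[
    D(c) = \frac{1}{\log c} + \frac{2}{\log(1/\beta)}
\]
and $\beta = \beta(c) \in (0, 1)$ is the root of $x e^{-x} = c e^{-c}$ other than $c$ itself (equivalently, $\beta = c\rho$, where $\rho < 1$ is the extinction probability of a Poisson$(c)$ Galton--Watson tree). Heuristically the first term is twice the typical distance inside the giant and the second is twice the length of the longest pendant path, which is governed by the subcritical dual branching process of parameter $\beta$.

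The first step is the numerical check that $D(21) < 0.44$. We have $\log 21 = 3.0445\ldots$, so the first term is $0.3285\ldots$. For the second term, $\beta = \beta(21)$ is minuscule: from $\beta = 21 e^{-21} e^{\beta}$ and $\beta < 1$ one gets $\beta = (1 + o(1)) \cdot 21 e^{-21}$, hence $\log(1/\beta) = 21 - \log 21 - o(1) = 17.955\ldots$, so the second term is $0.1114\ldots$. Adding, $D(21) = 0.4399\ldots < 0.44$, so for $n$ sufficiently large the giant component of $\cG(n, 21/n)$ has diameter at most $0.44 \log n$ whp.

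It then remains to deal with the other components, which is easy. Since $21 > 1$, whp every component of $\cG(n, 21/n)$ other than the giant has at most $\frac{2 \log n}{21 - 1 - \log 21} < 0.12 \log n$ vertices (the standard bound on the size of the second largest component of a supercritical binomial random graph), and hence diameter less than $0.44 \log n$. As $\diam(G)$ is the maximum of the diameters over all components, combining the two cases proves the lemma.

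The one thing to watch is that the numerics are genuinely tight: $D(21) \approx 0.4399$ sits only just below the target $0.44$, which is exactly why the constant $21$ (rather than something smaller) is taken and why one must estimate $\log 21$ and $\beta(21)$ to two or three significant figures rather than using crude bounds. Beyond that there is no real obstacle --- the lemma is a quotation of \cite{RW2010} together with this arithmetic check. Should one wish to avoid invoking the precise Riordan--Wormald constant, one could instead bound the diameter of the giant by a direct BFS/first-moment argument, as the excerpt remarks, but quoting their theorem is shorter.
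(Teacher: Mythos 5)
Your proposal is correct and takes essentially the same route as the paper, which states \cref{lem:diam} purely as a consequence of \cite[Thm.~1.1]{RW2010} without writing out any details. You simply make explicit what the paper leaves implicit: the arithmetic check that $\tfrac{1}{\log 21}+\tfrac{2}{21-\log 21}\approx 0.4398<0.44$ (your rounding to $0.4399$ is immaterial) and the easy bound on non-giant components, both of which are sound.
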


\begin{proof}[Proof of Lemma~\ref{lem:shortcycles}]
Let $V = V_1 \cup V_2$ with $\abs{V_1} = \abs{V_2} = n/2$. First reveal the edges inside $V_1$ and $V_2$, and let $C_1$ and $C_2$ be the largest components in $G[V_1]$ and $G[V_2]$ respectively. $G[V_1]$ and $G[V_2]$ are independently distributed as $\cG(n/2, 21/(n/2))$. 

By standard facts about random graphs (see, for example, \cite[Ch.~6]{Bollobas}), $C_1$ and $C_2$ both have size $\Omega(n)$ whp. Also, \cref{lem:diam} implies that whp $C_1$ and $C_2$ have diameter at most $0.44 \log n$.

Next, we reveal the edges between $V_1$ and $V_2$.  For a fixed vertex $v \in C_1$ write $X_v$ for the indicator random variable of the event that $v$ has a neighbour in $C_2$. We have $\bP(X_v = 0) = (1 - 42/n)^{\abs{C_2}} \leq e^{-42\abs{C_2}/n}$ which is at most some constant $a < 1$. Since the variables $X_v$ for $v \in C_1$ are independent, by Chernoff, we have whp that $\sum_v X_v \geq (1 - a) \abs{C_1}/2$, and so the set $A$ of vertices in $C_1$ that have a neighbour in $C_2$ has size at least $\Omega(n)$. 
Now, for any pair of vertices $u,v \in A$, concatenating the shortest path between them in $C_1$, their respective edges to vertices $w_u,w_v\in C_2$, and the shortest path between $w_u$ and $w_v$ in $C_2$ gives a cycle containing $u$ and $v$ of length at most $0.44 \log n + 0.44 \log n + 2 \leq 0.9 \log n$.
\end{proof}

Recall that $\cP$ is the set of known distances. We say that a pair of points $u, v$ is \defn{close} if $u$ and $v$ are in a cycle in $(V, \cP)$ of length at most $0.9 \log n$. We say that the pair $u, v$ is \defn{deducible} if $\abs{u - v}$ can be uniquely determined from $\cP$. Note that if some cycle-reconstructible cycle in $\cP$ contains both $u$ and $v$, then the pair $u, v$ is deducible. With this in mind, we bound the number of useful pairs of points that are in a short cycle, but where we cannot determine their distance.

\begin{lemma}\label{lem:nonded}
    If $(V, \cP)$ is distributed as $\cG(n,p)$, then whp there are at most $n^{2 - 0.002}$ pairs of points that are close and useful but not deducible.
\end{lemma}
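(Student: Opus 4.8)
The plan is to show that the only way a close, useful pair $u, v$ can fail to be deducible is if \emph{every} short cycle through $u$ and $v$ in $\cP$ happens to be non-cycle-reconstructible — but usefulness rules this out in expectation, and we amplify via the first moment method. More concretely, fix a close useful pair $u, v$; since it is close, there is at least one cycle of length $k \le 0.9 \log n$ in $(V, \cP)$ through both of them. If the pair is not deducible, then by the observation preceding the lemma, no such cycle is cycle-reconstructible. So it suffices to bound the expected number of pairs $u, v$ for which there exists a short cycle through both but no cycle-reconstructible one. I would condition on the pair being useful (a deterministic property of $V$), so that for every $k \le 0.9 \log n$, at most an $n^{-0.005}$ fraction of the $k$-tuples containing both $u$ and $v$ are non-cycle-reconstructible.

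The key computation is to estimate, for a fixed useful pair $u, v$ and fixed length $k$, the probability that $(V, \cP) \sim \cG(n, p)$ contains a cycle of length $k$ through $u$ and $v$ all of whose edges are revealed, while no such cycle is cycle-reconstructible. A cycle of length $k$ through $u$ and $v$ corresponds to choosing an ordered sequence of the remaining $k - 2$ vertices split into two arcs between $u$ and $v$; the number of such potential cycles is at most $n^{k-2}$ (up to constant/combinatorial factors that are absorbed), each present in $\cG(n,p)$ with probability $p^k$. Restricting to the non-cycle-reconstructible ones costs a factor $n^{-0.005}$ by usefulness. Hence the expected number of bad (close-but-no-good-cycle) configurations of length exactly $k$ through a fixed useful pair is at most, roughly,
\begin{equation*}
    n^{-0.005} \cdot n^{k - 2} \cdot p^{k} = n^{-0.005} \cdot n^{-2} \cdot (np)^{k} = n^{-0.005} \cdot n^{-2} \cdot 42^{k},
\end{equation*}
using $p = 42/n$. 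Summing over $k \le 0.9 \log n$ gives a geometric series dominated by its last term $42^{0.9 \log n} = n^{0.9 \log 42}$, so the expected number of bad configurations through a fixed useful pair is at most about $n^{-0.005} \cdot n^{-2} \cdot n^{0.9 \log 42}$. Summing over all $\binom{n}{2} \le n^2$ pairs, the expected number of close, useful, non-deducible pairs is at most roughly $n^{-0.005} \cdot n^{0.9 \log 42}$. The catch is that $0.9 \log 42 \approx 3.36 > 2$, so this naive bound is useless — it is \emph{way} larger than $n^{2-0.002}$.

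So the real work, and the main obstacle, is that counting potential cycles by $n^{k-2}$ is far too lossy when $k$ is as large as $0.9 \log n$: at that length $(np)^k$ is a huge polynomial power of $n$, reflecting the fact that in $\cG(n, 42/n)$ a typical pair lies on astronomically many long-ish cycles, not just one. The fix is to be cleverer about which cycles to count. Rather than counting \emph{all} short cycles through $u, v$, I would count, for each pair, just a single canonical witnessing structure: for instance, a \emph{shortest} cycle through $u$ and $v$, or better, use the specific structure from the proof of \cref{lem:shortcycles} — a shortest $u$–$v$ path in a component together with short pendant paths into a second component — which produces cycles whose total length is controlled but whose \emph{count} of available realizations is $O(1)$ or at most polylogarithmically many per pair, not $n^{k}$. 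Concretely, I expect the argument to go: reveal edges in two halves $V_1, V_2$ as in \cref{lem:shortcycles}; for a linear-sized set of pairs $u, v$, the canonical cycle through them has length $\le 0.9 \log n$ and is essentially \emph{determined} (shortest paths in trees/sparse components are essentially unique), so the event that this one cycle is non-cycle-reconstructible has probability $\le n^{-0.005}$ by usefulness applied to that specific tuple; a union bound over the $\le n^2$ pairs then gives at most $n^{2 - 0.005} \le n^{2 - 0.002}$ bad pairs whp (converting the expectation bound to a whp bound via Markov, since the count is a nonnegative integer). Making "the canonical cycle is essentially unique" precise — so that one can legitimately apply the $k$-bad bound to a \emph{single} tuple rather than paying for exponentially many — is where the care is needed, and I would lean on standard facts that in $\cG(n, c/n)$ the sparse components are treelike with $O(1)$ excess edges, so short shortest-paths come in only $O(1)$ varieties.
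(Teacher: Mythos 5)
You correctly diagnose that the naive first moment over all short cycles through a fixed pair blows up (the expected number of $k$-cycles through $u,v$ is of order $42^k n^{-2}$, which is a large power of $n$ at $k = 0.9\log n$), and your instinct to pass to a single witnessing cycle per pair points in the right direction. But the step that actually delivers the bound is missing. You assert that ``the event that this one cycle is non-cycle-reconstructible has probability $\le n^{-0.005}$ by usefulness applied to that specific tuple''. Usefulness is a counting statement about the \emph{fraction} of $k$-tuples through $u,v$ that are non-cycle-reconstructible; for a fixed tuple the property is deterministic, so such a probability bound only makes sense if the chosen cycle is random, and then you must identify its \emph{distribution} over the candidate tuples. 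The ingredient you need --- and the entirety of the paper's proof --- is vertex-exchangeability of $\cG(n,p)$: conditioned on the event that $u$ and $v$ lie on some $k$-cycle, every candidate $k$-tuple containing $u$ and $v$ is equally likely to appear as a cycle, so a (canonically or uniformly chosen) present cycle is uniformly distributed over the candidates and is therefore non-cycle-reconstructible with probability at most $n^{-0.005}$. Non-deducibility forces that chosen cycle to be non-reconstructible, summing over $k \le 0.9\log n$ costs nothing, and Markov finishes exactly as you say. Crucially, this argument needs no uniqueness of the witnessing cycle whatsoever.

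By contrast, the point you flag as ``where the care is needed'' --- making the canonical cycle ``essentially unique'' via treelike components --- is both false and beside the point. At $p = 42/n$ the giant component has $\Theta(n)$ excess edges, not $O(1)$, and shortest paths and cycles between typical vertices are far from unique; and even a genuinely unique canonical cycle would still be a random tuple whose law you must pin down before invoking the $n^{-0.005}$ fraction bound. Moreover, tying the canonical cycle to the two-component construction of \cref{lem:shortcycles} would only control the pairs produced by that construction, whereas the lemma concerns \emph{all} close pairs (the ``shortest cycle through $u,v$'' variant avoids this, but again only once the exchangeability point is supplied). In short: correct diagnosis of the obstacle and correct final Markov step, but the central probabilistic estimate is unjustified as written, and the conditioning-plus-symmetry argument that replaces it is the whole content of the paper's proof.
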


\begin{proof}
    Fix a pair of points $u, v \in V$ and let $k \leq 0.9 \log n$. Conditioned on the event that $(V, \cP)$ contains a $k$-cycle which contains $u$ and $v$, the probability that any particular $k$-tuple containing $u$ and $v$ appears in $(V, \cP)$ as a cycle is uniform. If $u, v$ is useful, then at most an $n^{-0.005}$ fraction of these $k$-tuples is non-reconstructible, and so
    \begin{equation*}
        \bP(u, v \textnormal{ not deducible} \mid u, v \textnormal{ in a $k$-cycle and useful}) \leq n^{-0.005}.
    \end{equation*}
    This holds for all $k \leq 0.9 \log n$ and so $\bP(u, v \textnormal{ not deducible} \mid u, v \textnormal{ close and useful}) \leq n^{-0.005}$. Hence,
    \begin{equation*}
        \bP(u, v \textnormal{ close and useful but not deducible}) \leq n^{-0.005}.
    \end{equation*}
    Let $X$ denote the number of pairs which are close and useful but not deducible. Then $\bE(X) \le n^{-0.005} \cdot \binom{n}{2}$. By Markov's inequality,
    \begin{equation*}
        \bP(X > n^{2 - 0.002}) \le \frac{\bE(X)}{n^{2 - 0.002}} \le n^{-0.003}. \qedhere
    \end{equation*}
\end{proof}

We are now ready to prove the statements \ref{part:linear} and \ref{part:almostall} of \cref{thm:linear}.

\begin{proof}[Proof of \cref{thm:linear}\ref{part:linear}]
    By \cref{lem:shortcycles}, whp there are $\Omega(n^2)$ close pairs of points in $(V, \cP)$. Of the close pairs, at most $n^{2 - 0.004}$ pairs are useless by \cref{cor:useless} and whp at most $n^{2 - 0.002}$ are useful but not deducible by \cref{lem:nonded}. In particular, whp at least $\Omega(n^2)$ pairs of vertices in $(V, \cP)$ are deducible.
    
    Finally, \cref{thm:BT} implies that whp there is a set of $\Omega(n)$ vertices that can be reconstructed up to isometry.
\end{proof}

Note that if we have reconstructed a set $S$ of points and we know at least two distances from another point $v$ to some points in $S$, this uniquely determines the position of $v$ relative to $S$, and so we can reconstruct $S \cup \set{v}$. This allows us to prove part~\ref{part:almostall}.

\begin{proof}[Proof of \cref{thm:linear}\ref{part:almostall}]
    By \cref{thm:linear}\ref{part:linear}, there is a constant $c > 0$ such that for $p = 42 / n$ whp there is a set $R$ of at least $c n$ points that can be reconstructed. Let $d>c$ be a constant, $q = d / (c n)$ and consider $G \sim \cG(n, q)$. Every vertex $v \notin R$ satisfies
    \begin{equation*}
        \bP(\abs{N_{G}(v) \cap R} < 2) = (1 - q)^{\abs{R}} + \abs{R} q (1 - q)^{\abs{R} - 1} \le 2 n q (1 - q)^{c n} \le (2 d / c) e^{-d}.
    \end{equation*}
    Since all of these events are independent, a Chernoff bound implies that whp at most $(4 d / c) e^{-d} n$ points $v \notin R$ have at most one neighbour in $R$. Because every point with two neighbours in $R$ can also be reconstructed relative to $R$, it follows that whp a set of at least $(1 - (4 d / c) e^{-d}) n$ points can be reconstructed in $\cG(n, p) \cup \cG(n, q)$ and therefore also in $\cG(n, p + q) \sim \cG(n, \omega / n)$ where $\omega = 42 + d / c$. Since $(4 d / c) e^{-d} \to 0$ when $d \to \infty$, this proves the claim.
\end{proof}

\section{Hitting time for complete reconstruction}\label{sec:hitting}

In this section, we prove \cref{thm:hitting}. For this, we use the following standard coupling of random graphs. Let $(U_e)_{e \in E(K_n)}$ be a collection of independent random variables all uniformly distributed on $[0, 1]$. For $p \in [0, 1]$, let \defn{$G_p$} be the subgraph of $K_n$ whose edges are exactly those edges $e$ with $U_e \leq p$. Note that $G_p$ is distributed as $\cG(n, p)$ and, as $p$ is increased continuously from 0 to 1, $G_p$ evolves as a random graph process where edges are added one-by-one in a uniformly random order (note that with probability 1 all $U_e$ are distinct). We use this to model the process of revealing distances between pairs of points in $V$ one-by-one uniformly at random. We will freely use the following facts about $\cG(n, p)$ in this section (see, for example, \cite[Ch.~7]{Bollobas}). For any positive integer $k$: if $p \leq (\log n + (k - 1) \log\log n - \omega(1))/n$, then whp $\cG(n, p)$ has minimum degree at most $k - 1$ and if $p \geq (\log n + (k - 1) \log\log n + \omega(1))/n$, then whp $\cG(n, p)$ has minimum degree at least $k$.

Three particular values of $p$ will be important for our analysis. \Cref{thm:linear}\ref{part:almostall} implies that there is some large constant $\omega$ such that whp there is a set $R \subseteq V$ of size at least $0.9n$ that can be reconstructed in $G_{\omega/n}$. Let $p_1 = \omega/n$. Let $p_2 = p_1 + (0.9 \log n)/n$ and $p_3 = (2 \log n)/n$. Note that whp $G_{p_2}$ still contains isolated vertices and so the whole of $V$ is not yet reconstructible. We will show that the whole of $V$ is reconstructible in $G_{p_3}$ and so our analysis will focus on the range $[p_2, p_3]$. 

\begin{lemma}\label{lem:gp123}
    Whp, for every edge $uv$ in $G_{p_3}$ with $u, v \notin R$, at least one of $u, v$ has at least two neighbours among $R$ in $G_{p_2}$.
\end{lemma}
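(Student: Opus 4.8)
The plan is to bound the probability that the asserted property \emph{fails} for a fixed candidate edge $uv$ with $u, v \notin R$, and then take a union bound over the at most $\binom{n}{2}$ such pairs. The property fails for $uv$ precisely when: (i) $uv$ is an edge of $G_{p_3}$, and (ii) neither $u$ nor $v$ has two or more neighbours among $R$ in $G_{p_2}$, i.e.\ each of $u, v$ has at most one neighbour in $R$ within $G_{p_2}$. Since the edges between $\{u,v\}$ and $R$ are disjoint from the edge $uv$ itself (as $u, v \notin R$), and $G_{p_2} \subseteq G_{p_3}$ uses the same underlying uniform variables $U_e$, the event in (i) and the two events in (ii) involve disjoint edge sets and are therefore independent. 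So the failure probability for a fixed $uv$ factorises as
\begin{equation*}
    \bP(uv \in G_{p_3}) \cdot \bP(\abs{N_{G_{p_2}}(u) \cap R} \le 1) \cdot \bP(\abs{N_{G_{p_2}}(v) \cap R} \le 1).
\end{equation*}

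First I would record the easy estimate $\bP(uv \in G_{p_3}) = p_3 = (2\log n)/n$. Next, for a fixed $w \in \{u,v\}$, the number of neighbours of $w$ in $R$ in $G_{p_2}$ is $\mathrm{Bin}(\abs{R}, p_2)$ with $\abs{R} \ge 0.9n$ and $p_2 \ge (0.9\log n)/n$, so $\abs{R} p_2 \ge 0.81 \log n$. A standard binomial tail bound then gives
\begin{equation*}
    \bP(\mathrm{Bin}(\abs{R}, p_2) \le 1) \le (1 + \abs{R} p_2)(1 - p_2)^{\abs{R} - 1} \le (1 + \log n)\, e^{-0.8\log n} \le n^{-0.7}
\end{equation*}
for $n$ large. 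Multiplying the three factors, the failure probability for a single pair $uv$ is at most $(2\log n / n) \cdot n^{-0.7} \cdot n^{-0.7} = O((\log n)\, n^{-2.4})$, and summing over the fewer than $n^2$ pairs yields a total failure probability of $O((\log n)\, n^{-0.4}) = o(1)$, as required. One technical wrinkle: $R$ is itself a random set determined by $G_{p_1} \subseteq G_{p_2}$, so the edges from $w$ to $R$ are not literally independent of the event defining $R$. I would handle this by conditioning on $G_{p_1}$ (which fixes $R$), noting that the bound $\abs{R} \ge 0.9n$ holds whp, and observing that the edges counted in $N_{G_{p_2}}(w) \cap R$ that are \emph{not already present in} $G_{p_1}$ are still fresh; more cleanly, one can reveal $R$ using only $G_{p_1}$, then the relevant estimate uses $\mathrm{Bin}(\abs{R}, (p_2 - p_1)/(1 - p_1))$ for the newly added edges, and since $p_2 - p_1 = (0.9\log n)/n$ the same computation goes through with essentially the same constants.

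The main obstacle is this dependence between $R$ and the edges being counted, which is really just bookkeeping: the clean fix is to work conditionally on $G_{p_1}$ throughout, treat $R$ as fixed with $\abs{R} \ge 0.9n$ on the whp event, and use the sprinkled edges in $G_{p_2} \setminus G_{p_1}$ — which are independent of $G_{p_1}$ and hence of $R$ — to guarantee the two neighbours. Everything else is a routine first-moment computation, and the slack in the exponents ($n^{-2.4}$ against $n^2$ pairs) is comfortable.
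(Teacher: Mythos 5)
Your proof is correct and follows essentially the same route as the paper: condition on $G_{p_1}$ to fix $R$, use the sprinkled edges between $p_1$ and $p_2$ to get $\bP(\abs{N_{G_{p_2}}(w)\cap R}\le 1)\le n^{-\Omega(1)}$ for each $w\notin R$, exploit independence over disjoint edge sets, and finish with a first-moment bound. The only (cosmetic) difference is that the paper first bounds the number of $G_{p_3}$-edges outside $R$ by $O(n\log n)$ whp and sums over those, whereas you fold the factor $\bP(uv\in G_{p_3})\approx p_3$ into a union bound over all pairs — the same computation, reorganised.
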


\begin{proof}
    First, reveal all edges in $G_{p_1}$. This determines $R$ and a Chernoff bound implies that whp $G_{p_1}$ has at most $2 p_1 \tbinom{n}{2} \le \omega n$ edges. Now, reveal the edges of $G_{p_3}$ in the complement of $R$. This adds every edge independently with a probability of at most $p_3$ to the graph $G_{p_1}$. Hence, a Chernoff bound implies that whp this adds at most $2 p_3 \tbinom{n}{2} \le 2 n \log n$ edges to the complement of $R$, and so $G_{p_3}$ has in total at most $\omega n + 2 n \log n \le 3 n \log n \eqqcolon m$ edges in the complement of $R$.

    Finally, reveal the edges of $G_{p_2}$ between $R$ and the complement of $R$. Each such edge was already present in $G_{p_1}$ or is added independently with probability at least $p \coloneqq p_2 - p_1 = (0.9 \log n) / n$ to the graph $G_{p_1}$. Therefore, every point $v \notin R$ satisfies
    \begin{align*}
        \bP(\abs{N_{G_{p_2}}(v) \cap R} < 2) & \le (1 - p)^{\abs{R}} + \abs{R} p (1 - p)^{\abs{R} - 1} \le 2 n p (1 - p)^{\abs{R}} \\
        & \le 2 n p (1 - p)^{0.9 n} \le (2 n p) e^{- 0.9 p n} \le (2 \log n) n^{-0.81} \le n^{-0.8}.
    \end{align*}
    Since these events are independent between distinct points $u, v \notin R$, this implies
    \begin{equation*}
        \bP(\max(\abs{N_{G_{p_2}}(u) \cap R}, \abs{N_{G_{p_2}}(v) \cap R}) < 2) \le (n^{-0.8})^2 = n^{-1.6}.
    \end{equation*}
    Let $X$ denote the number of edges $u v$ in $G_{p_3}$ such that $u, v \notin R$, but $u$ and $v$ both have at most one neighbour among $R$ in $G_{p_2}$. Since $G_{p_3}$ has at most $m$ edges in the complement of $R$, this implies that $\bE(X) \le m \cdot n^{-1.6} \le 3 n^{-0.6} \log n \le n^{-0.5}$, and so $\bP(X \ge 1) \le n^{-0.5}$.
\end{proof}

If a vertex $v$ has two neighbours among a reconstructible set, then the position of $v$ can be reconstructed with respect to that set. With this in mind, we define, for each $p \in [0, 1]$, \defn{$R'_p$} and \defn{$R''_p$} as follows.
\begin{align*}
    R'_p & = R \cup \set{v \in V \colon \abs{N_{G_p}(v) \cap R} \geq 2}, \\
    R''_p & = R'_p \cup \set{v \in V \colon \abs{N_{G_p}(v) \cap R'_p} \geq 2}.
\end{align*}
We now collect some important facts about the $R'_p$ and $R''_p$.
\begin{lemma}\label{lem:Rp}
    With high probability the following all hold.
    \begin{enumerate}[label = \alph{*}., ref = \alph{*}]
        \item For all $p \geq p_1$, $R'_p$ and $R''_p$ are reconstructible in $G_p$. \label{part:recon}
        \item For all $p \in [p_2, p_3]$, every edge in $G_p$ is incident to a vertex in $R'_p$. \label{part:incident}
        \item For all $p \in [p_2, p_3]$, $R''_p$ contains every vertex of degree at least two in $G_p$. \label{part:deg2}
    \end{enumerate}
\end{lemma}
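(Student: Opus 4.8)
The plan is to prove the three parts essentially in order, using Lemma~\ref{lem:gp123} as the key input for parts~\ref{part:incident} and~\ref{part:deg2}, and a simple inductive/deterministic argument for part~\ref{part:recon}. First, for part~\ref{part:recon}: recall that whp $R$ is reconstructible in $G_{p_1}$, hence in every $G_p$ with $p \ge p_1$ since revealing more distances cannot destroy reconstructibility. Now observe the deterministic fact flagged in the text just before the lemma: if $S$ is reconstructible in $G_p$ and a vertex $v$ has at least two neighbours in $S$ with both distances revealed, then the two candidate positions for $v$ (on either side of any one neighbour) are pinned down to one by the second neighbour, so $S \cup \set{v}$ is reconstructible. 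Applying this to every $v$ with $\abs{N_{G_p}(v) \cap R} \ge 2$ simultaneously — one checks the positions are determined independently of the order in which we add them, since each is determined purely by its distances into the fixed reconstructible set $R$ — shows $R'_p$ is reconstructible in $G_p$. Repeating the argument one more layer with $R'_p$ in place of $R$ gives that $R''_p$ is reconstructible in $G_p$. This holds for all $p \ge p_1$ on the (whp) event that $R$ is reconstructible in $G_{p_1}$.

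For part~\ref{part:incident}: fix $p \in [p_2, p_3]$ and take any edge $uv \in G_p$. If either endpoint lies in $R$ we are done, so assume $u, v \notin R$. Since $G_p \subseteq G_{p_3}$ and $uv$ is then an edge of $G_{p_3}$ with both endpoints outside $R$, Lemma~\ref{lem:gp123} says (whp) at least one of $u, v$, say $u$, has at least two neighbours in $R$ already in $G_{p_2} \subseteq G_p$; hence $u \in R'_p$. A subtlety: Lemma~\ref{lem:gp123} is a single whp statement about the coupled process, and "$N_{G_{p_2}}(u) \cap R$" is monotone in the second coordinate, so the conclusion "$u$ has two $R$-neighbours in $G_{p_2}$" automatically holds for all $p \ge p_2$; thus one application of that lemma covers the whole interval $[p_2, p_3]$ with room to spare, and we do not need a union bound over $p$.

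For part~\ref{part:deg2}: fix $p \in [p_2, p_3]$ and let $v$ have $\deg_{G_p}(v) \ge 2$, with two distinct neighbours $x, y$. If $v \in R'_p$ then $v \in R''_p$ and we are done, so suppose $v \notin R'_p$; in particular $v \notin R$. For each neighbour $w \in \set{x, y}$: if $w \in R'_p$ we are happy; otherwise $vw$ is an edge of $G_p$ with $v \notin R$ and $w \notin R$ (if $w \in R$ then $v$ would have a neighbour in $R$, but having $\ge 2$ neighbours of $R$ is what puts $v$ in $R'_p$, so we'd need to be slightly careful — but one neighbour in $R$ does not suffice, so this case is genuinely possible), and then part~\ref{part:incident} applied to the edge $vw$ forces $w \in R'_p$ since $v \notin R'_p$. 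Either way both $x, y \in R'_p$, so $v$ has at least two neighbours in $R'_p$ and hence $v \in R''_p$. The one gap to patch is the case where $v$ has exactly one neighbour in $R$ and its other neighbour is also outside $R'_p$: here apply part~\ref{part:incident} to that second edge as above. Assembling, every degree-$\ge 2$ vertex in $G_p$ lies in $R''_p$.

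The main obstacle is not any single step — each is short — but getting the quantifier structure right: all three parts are "for all $p$ in a range" statements, and the naive approach would union-bound over a net of $p$ values. The clean fix, which I'd emphasise, is that every event in sight ("$R$ reconstructible", "$u$ has two $R$-neighbours", "edge $uv$ present") is monotone in the coupling parameter, so each "for all $p$" statement reduces to a single statement at one endpoint (the smallest $p$ for reconstructibility-type events, and $p_3$ for edge-counting events via Lemma~\ref{lem:gp123}), and then Lemmas~\ref{lem:gp123} and~\ref{thm:linear}\ref{part:almostall} apply directly with no further probability cost. The second point needing a little care is verifying that in part~\ref{part:recon} the reconstructed positions of the many vertices added to form $R'_p$ (and then $R''_p$) are mutually consistent and do not depend on an ordering — this follows because each new vertex's position is determined by its revealed distances to the already-fixed set $R$ (respectively $R'_p$) alone.
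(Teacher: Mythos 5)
Your proposal is correct and takes essentially the same route as the paper: part~(a) via the deterministic extension of the reconstructible set $R$ by vertices with two revealed distances into it, and parts~(b) and~(c) by reducing the whole interval $[p_2,p_3]$ to the single whp event of \cref{lem:gp123} using the monotonicity $G_{p_2}\subseteq G_p\subseteq G_{p_3}$ (equivalently $R'_{p_2}\subseteq R'_p$). The only cosmetic difference is that your case analysis in part~(c) is longer than needed, since part~(b) applies to every edge of $G_p$ regardless of whether an endpoint lies in $R$, so one can immediately conclude that either $v\in R'_p$ or all of its neighbours are.
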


\begin{proof}
    Note that $R$ is reconstructible in $G_{p_1}$. For any $p \geq p_1$, $G_p$ contains $G_{p_1}$ and so $R$ is reconstructible in $G_p$. $R'_p$ consists of $R$ and all vertices with at least two neighbours in $R$ and so is reconstructible in $G_p$. Similarly for $R''_p$. This proves \ref{part:recon}.
    
    By \cref{lem:gp123}, whp every edge in $G_{p_3}$ has an end-point in $R'_{p_2}$. Now let $p \in [p_2, p_3]$ and let $uv$ be an edge of $G_p$. Then $uv$ is an edge of $G_{p_3}$. Hence, at least one of $u$, $v$ is in $R'_{p_2} \subseteq R'_p$. This proves \ref{part:incident}.

    Let $u$ be a vertex of degree at least two in $G_p$. By \ref{part:incident}, either $u$ is in $R'_p$ or every neighbour of $u$ is in $R'_p$. Either way, $u$ is in $R''_p$ which proves \ref{part:deg2}.
\end{proof}

Now whp $\cG(n, p_3)$ has minimum degree at least two and so, by the time that the random graph process from the coupling reaches minimum degree two, \cref{lem:Rp} tells us that whp the whole of $V$ can be reconstructed. \Cref{thm:linear}\ref{part:all} follows immediately.

We say that an ordered pair of points $(u, v)$ is \defn{uncertain} in $G_p$ if $u v$ is the only edge incident to $u$ and the point $w = 2 v - u$ is a point of $V$ with degree one in $G_p$. We call $u$ uncertain in $G_p$, if $(u, v)$ is uncertain in $G_p$ for some $v \in V$.

\begin{lemma}\label{lem:uncertain}
    Whp, for all $p \in [p_2, p_3]$, $G_p$ contains no uncertain point.
\end{lemma}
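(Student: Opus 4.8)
The plan is to show that for each fixed ordered pair $(u, v)$ with $w = 2v - u \in V$, the probability that $(u, v)$ is uncertain at some point in $[p_2, p_3]$ is small enough that a union bound over all $O(n^2)$ such pairs succeeds. Since being uncertain is a monotone-decreasing-then-nothing kind of event only at the level of a single pair — once $u$ gains a second edge it is never uncertain again — the event ``$(u, v)$ uncertain for some $p \in [p_2, p_3]$'' is equivalent to: at $p = p_2$ the vertex $u$ has $uv$ as its unique edge (or gains exactly this one edge), $w$ has degree at most one throughout, and no second edge at $u$ or at $w$ appears before... Actually cleaner: the event forces that throughout $[p_2, p_3]$ the vertex $u$ has degree exactly one (its only edge being $uv$) and $w$ has degree at most one. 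In particular, examining the graph at $p = p_3 = (2\log n)/n$, we need both $\deg_{G_{p_3}}(u) \le 1$ with its edge going to $v$, and $\deg_{G_{p_3}}(w) \le 1$.

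First I would compute, for fixed $(u,v)$ with $w = 2v-u \in V$, the probability that $\deg_{G_{p_3}}(u) \le 1$ and $\deg_{G_{p_3}}(w) \le 1$. The edges at $u$ other than $uv$ (there are $n - 2$ of them) and the edges at $w$ other than $wu, wv$ are all present independently with probability $p_3$, and these edge sets are disjoint except possibly for the single edge $uw$. Ignoring that one shared edge only helps an upper bound, so
\begin{equation*}
    \bP(\deg_{G_{p_3}}(u) \le 1 \text{ and } \deg_{G_{p_3}}(w) \le 1)
    \le \bigl[(1 - p_3)^{n-2} + (n-2)p_3(1-p_3)^{n-3}\bigr]^2
    \le \bigl(2 n p_3 \, e^{-p_3 (n-3)}\bigr)^2.
\end{equation*}
With $p_3 n = 2\log n$ this is at most $\bigl(4 \log n \cdot n^{-2 + o(1)}\bigr)^2 = n^{-4 + o(1)}$. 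Summing over the at most $n^2$ ordered pairs $(u,v)$ (for each $u$ and each $v$ there is at most one candidate $w = 2v - u$, so there are at most $n^2$ relevant pairs) gives a total probability of $n^{-2 + o(1)} \to 0$. Hence whp no such pair $(u,v)$ has both $u$ and $w$ of degree at most one in $G_{p_3}$, and since any uncertain pair at any $p \le p_3$ requires exactly this, whp $G_p$ contains no uncertain point for any $p \in [p_2, p_3]$.

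The one subtlety — and the only place one has to be slightly careful — is the correlation introduced by the shared potential edge $uw$ and, more importantly, making sure the ``for all $p \in [p_2, p_3]$'' quantifier is handled by a single event rather than a union over a continuum. That is immediate here because degrees are monotone in $p$: if $(u,v)$ is uncertain at some $p$ then $u$ has degree $\le 1$ at that $p$ and hence $\deg_{G_{p_3}}(u) = 1$ with neighbour $v$ forces $\deg_{G_{p_3}}(u) \le 1$... wait, we need that $u$ never gains a second edge up to $p_3$, which is exactly $\deg_{G_{p_3}}(u) \le 1$; similarly $w$ has degree $\le 1$ at $p$ and we only claim the weaker $\deg_{G_{p_3}}(w) \le 1$, which could fail — but then $(u,v)$ was not uncertain at $p_3$, yet it might have been uncertain earlier. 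So I should instead bound using degrees \emph{at $p_2$}: if $(u,v)$ uncertain at some $p \ge p_2$ then $\deg_{G_{p_2}}(u) \le 1$ and $\deg_{G_{p_2}}(w) \le 1$, and since $p_2 = p_1 + (0.9\log n)/n \ge (0.9 \log n)/n$ has $p_2 n \to \infty$ (indeed $p_2 n \ge 0.9 \log n$), the same computation with $p_2$ in place of $p_3$ gives per-pair probability $\bigl(2np_2 e^{-p_2(n-3)}\bigr)^2 \le n^{-1.8 + o(1)}$, and the union bound over $n^2$ pairs gives $n^{0.2 + o(1)}$, which does \emph{not} tend to zero. So the genuine obstacle is that $p_2$ is too small for this crude two-vertex bound — one really must use $p_3$ and argue that an uncertain pair at any $p \in [p_2, p_3]$ would have to remain uncertain, or at least keep $u$ of degree one, all the way up to $p_3$ (true for $u$) \emph{and} that $w$ also stays degree $\le 1$ up to $p_3$ — which requires the extra observation that if $w$ gains its second edge at some $p^* \in (p_2, p_3]$, then for $p < p^*$ the pair was uncertain but we can simply note that this still forces $\deg_{G_{p_2}}(u) \le 1$ and we are stuck again. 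The clean fix, and the step I expect to be the crux, is to condition on the (whp) event from \cref{lem:Rp} that every degree-$\ge 2$ vertex lies in the reconstructible set and handle uncertain $u$ differently: an uncertain $u$ at $p$ has degree one, so by \cref{lem:Rp}\ref{part:incident} its unique neighbour $v \in R'_p$, and then reconstructibility of $R'_p$ pins down $w = 2v - u$; I would argue that whp $w$ itself is not an isolated-or-degree-one vertex left ``ambiguous'', instead using that the number of degree-$\le 1$ vertices in $G_{p_3}$ is whp small and spread out, so the event that two of them form a reflected pair $(u, 2v-u, v)$ with $v$ their common ``hinge'' is controlled by a second-moment / direct union bound over triples — and it is this triple union bound at $p_3$, where $p_3 n = 2\log n$ makes $\bP(\deg \le 1) = n^{-1+o(1)}$ per vertex, giving $n^3 \cdot (n^{-1})^2 \cdot n^{-1} = n^{o(1)}$... which is again borderline. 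I therefore expect the actual argument to exploit that $w$ must have degree exactly one with its edge to $v$ as well (not just degree $\le 1$), removing a factor of $n$ and making the union bound over triples converge; pinning down this extra constraint from the definition of uncertain is the main thing to get right.
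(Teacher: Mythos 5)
There is a genuine gap: you correctly diagnose that a single-time degree computation fails (at $p_2$ the union bound over $n^2$ pairs gives $n^{0.2+o(1)}$; at $p_3$ the reduction is invalid because $w$, and indeed $u$, may gain edges after the uncertain moment $p$ --- degrees are monotone \emph{increasing}, so ``uncertain at some $p$'' only controls degrees at $p_2$, not at $p_3$, contrary to your opening claim). But you never find the fix, and the closing speculation goes astray: the definition of uncertain does not require $w$'s unique edge to go to $v$, so the ``extra constraint'' you hope to exploit is not there, and the proposal ends without a working bound.

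The missing idea is to charge the \emph{present} and \emph{absent} edges at different scales using the coupling. Fix $(u,v)$ with $w=2v-u\in V$. If $(u,v)$ is uncertain at some $p\in[p_2,p_3]$, then: the edge $uv$ is present at $p$, hence $U_{uv}\le p_3$; some edge $wx$ is present at $p$, hence $U_{wx}\le p_3$ for some $x$; and \emph{every other} edge incident to $u$ or $w$ is absent at $p$, hence has $U_e> p\ge p_2$. These are independent events on disjoint edges, so the probability is at most $n\,p_3^2\,(1-p_2)^{2n-5}\le 8(\log n)^2\,n^{-1}\,n^{-1.8}\le n^{-2.5}$, and the union bound over the at most $n^2$ ordered pairs gives $n^{-0.5}$. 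Your degree-based events omit precisely the requirement that $uv$ and $wx$ be present; insisting on those two present edges costs only $n p_3^2 = O((\log n)^2/n)$ and is exactly the factor of roughly $n^{-1}$ that your union bound was short by. (Your instinct to instead route through \cref{lem:Rp} and count reflected triples is not needed and, as your own estimate shows, stays borderline.)
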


\begin{proof}
    Let $(u, v)$ be an ordered pair of points such that $w = 2 v - u$ is a point of $V$. If $(u, v)$ is uncertain in $G_p$ for some $p \in [p_2, p_3]$, then there is a point $x$ such that the edges $u v$ and $w x$ are present in $G_p \subseteq G_{p_3}$, but every other edge incident to $u$ and $w$ cannot be present in $G_p \supseteq G_{p_2}$. In particular, we get that
    \begin{align*}
        \bP((u, v) \text{ is uncertain in some } G_p) & \le n p_3^2 (1 - p_2)^{2 n - 5} \le 2 n p_3^2 (1 - p_2)^{2 n} \\
        & \le (2 n p_3^2) e^{- 2 n p_2} \le 8 (\log n)^2 n^{-1} n^{-1.8} \le n^{-2.5}.
    \end{align*}
    Let $X$ denote the number of ordered pairs of points $(u, v)$ which are uncertain in some $G_p$. Since there are at most $n^2$ ordered pairs of points, this implies that $\bE(X) \le n^2 \cdot n^{-2.5} = n^{-0.5}$, and so $\bP(X \ge 1) \le n^{-0.5}$.
\end{proof}

We say that a point $u$ is \defn{undecidable} in $G_p$ if it has degree one in $G_p$ and $(u, v)$ is not secure where $v$ is the unique neighbour of $u$. Note that \cref{thm:hitting} says that whp $G_p$ is reconstructible exactly when it has no isolated nor undecidable points.

\begin{proof}[Proof of \cref{thm:hitting}]
    Using \cref{lem:Rp} and \cref{lem:uncertain}, we know that whp all of the following hold for all $p \in [p_2, p_3]$. Firstly, $R''_p$ can be reconstructed in $G_p$. Secondly, every edge in $G_p$ is incident to a vertex in $R''_p$. Thirdly, $G_p$ contains no uncertain point. Fourthly, $R''_p$ contains all vertices of degree at least two in $G_p$. Finally, $G_{p_2}$ has isolated points while $G_{p_3}$ has minimum degree at least two.

    Now, consider the minimal $p$ such that the graph $G_p$ has no isolated and no undecidable points. By the final property, we know that $p_2 < p \le p_3$. In particular, $R''_p$ can be reconstructed in $G_p$ by the first property. Let $u$ be a point in the complement of $R''_p$. Since $u$ cannot be an isolated point, $u$ must have a unique neighbour $v$, and that neighbour must be in $R''_p$ by the second property. So $v$ has been reconstructed and we therefore know that $u$ can only be at position $u$ or $w = 2 v - u$.

    Given that $u$ is not undecidable, $(u, v)$ must be secure which means that $w$ is a point of $V$. Since $w$ cannot be isolated and $u$ is not uncertain by the third property, it then follows that the degree of $w$ must be at least two. Hence, by the fourth property, $w \in R''_p$, and so $w$ has been reconstructed. So we know that the position $w$ is already occupied by a point different from $u$, and so $u$ can only be at a single possible position relative to the rest of the points. Hence, we can also reconstruct $u$. Because $u$ was an arbitrary point of the complement of $R''_p$, it follows that all points of $G_p$ can be reconstructed up to isometry.

    Finally note that the graph preceding $G_p$ in the random graph process has an isolated or undecidable point $v \in V$. Such a point has at least two possible positions relative to the rest of $V$ and is therefore not determined up to isometry. Hence, $G_p$ is the first graph in the random graph process that can be reconstructed up to isometry.
\end{proof}

\cref{thm:hittingalg} is now an easy consequence as we can simply follow the proofs from this section and the previous section to obtain the algorithm that we want.

\begin{proof}[Proof of \cref{thm:hittingalg}]
    Our algorithm is composed of the following steps.
    \begin{enumerate}[label = \alph{*}., ref = \alph{*}]
        \item Reveal pairwise distances one-by-one until the graph of known pairwise distances has $42n$ edges. Call this graph $G'$.
        \item Enumerate all walks (that do not repeat edges) of length at most $0.9 \log n$ in $G'$ by starting at an arbitrary vertex and picking neighbours sequentially. From this, find a list of all cycles of length at most $0.9 \log n$ in $G'$.\label{part:findcycles}
        \item Then check each cycle $v_1 \dots v_k$ of length at most $0.9 \log n$ in $G'$. If the distances between consecutive points on the cycle are $d_1, \dots, d_k$ and there are unique $\epsilon_1, \dots, \epsilon_{k - 1} \in \set{-1, 1}$ such that $\sum_{i = 1}^{k - 1} \epsilon_i d_i = d_k$, then the distance between $v_i$ and $v_j$ must be $\abs{\sum_{l = i}^{j - 1} \epsilon_l d_l}$.\label{part:checkcycles}
        \item Using these new distances, apply \cref{thm:BT} to reconstruct a set of points $R$ (a polynomial time algorithm can be obtained from the proofs in \cite{BT2022}).
        \item Reconstruct and add to $R$ every vertex with two neighbours in $R$ as well as every vertex with one neighbour in $R$ whose other possible position is already occupied by a vertex of $R$.
        \item Continue to reveal pairwise distances one-by-one. Each time a distance between points $u, v$ is added where $v \in R$ and $u \notin R$, check if $u$ has two neighbours in $R$ or if the reflection of $u$ over $v$ is in $R$. If either of these two outcomes occur, then reconstruct $u$ and add $u$ to $R$.
    \end{enumerate}
    Let $p$ be such that $G'$ corresponds to $G_p$ in the coupling. By a Chernoff bound, with an exponential (in $n$) failure probability, $42/n < p < 168/n$. When $p < 168/n$, the expected number of walks in $G_p$ (that do not repeat edges) of length at most $0.9 \log n$ is at most
    \begin{equation*}
        \sum_{k \leq 0.9 \log n} p^k n^{k + 1} < n \sum_{k \leq 0.9 \log n} 168^k < n^6,
    \end{equation*}
    and the number of such walks is always at most $n^{0.9 \log n}$. Hence, the expected number of such walks in $G'$ is polynomial in $n$ which implies that the expected runtime of step~\ref{part:findcycles} is polynomial in $n$. It also follows that if $X$ is the number of cycles of length at most $0.9 \log n$ in $G'$, then $\bE(X)$ is polynomial in $n$. For each cycle of length at most $0.9 \log n$, there are at most $2^{0.9 \log n}$ possible $(\epsilon_1, \dotsc, \epsilon_{k - 1})$ and so each check in step~\ref{part:checkcycles} takes time polynomial in $n$, implying that the runtime of part~\ref{part:checkcycles} is $X$ times a polynomial in $n$. Hence, the expected runtime of the first three steps is polynomial in $n$. The final three steps can be performed in polynomial time.

    The set $R$ obtained by the algorithm is certainly reconstructible. We are left to check that whp $R = V$ exactly when $V$ is reconstructible. As shown in the proofs of \cref{thm:linear} and \cref{thm:hitting}, whp the initial set $R$ will have size $\Omega(n)$, and we will then add every point to $R$ that has degree two or that has degree one and is not undecidable. Thus, by \cref{thm:hitting}, whp this algorithm will reconstruct $V$ exactly at the time when this is possible.
\end{proof}

We have already shown that if every point is incident to at least two known distances, then whp the whole of $V$ is reconstructible. While in some cases the whole of $V$ might be reconstructible even before that time, we now show that this does not happen much earlier.

\begin{theorem}\label{thm:arbitrarily}
    Let $f \colon \bN \to \bR$ be any function going to infinity. Let $V$ be a set of $n$ points on the real line. Suppose the distances between pairs of points in $V$ are revealed one-by-one in a uniformly random order. If there are at least $f(n)$ points that are incident to only one revealed distance, then whp it is not possible to reconstruct the whole of $V$.
\end{theorem}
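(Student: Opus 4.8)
\emph{Overview.}
The plan is to locate, with high probability, an \emph{undecidable} point whenever there are at least $f(n)$ points of degree one: if $u$ has degree one with unique neighbour $v$ and $w = 2v - u \notin V$, then relabelling $u$ at $w$ produces a point set with exactly the same revealed distances that is not isometric to $V$, so $V$ is not reconstructible. The heart of the argument is a deterministic fact — for \emph{every} point set most ordered pairs are not secure — after which a first‑ and second‑moment computation at a single well‑chosen density produces such a point. Monotonicity in the coupling $(G_p)$ then transfers this conclusion to all of the relevant graphs at once, avoiding any union bound over time.

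\emph{Deterministic input.}
First I would show that for any $n$‑point set $V \subset \bR$, at least $n^2/2 - n$ of the $n(n-1)$ ordered pairs of distinct points are not secure. A secure ordered pair $(u,v)$ is exactly an ordered triple of distinct points $(u, v, 2v-u)$ in arithmetic progression with middle term $v$; writing $V = \{a_1 < \dots < a_n\}$, the number of such progressions with middle term $a_j$ is at most $\min(j-1, n-j)$, since the progression is pinned down by its smaller end and equally by its larger end. Hence the number of secure ordered pairs is at most $2\sum_{j=1}^{n} \min(j-1, n-j) \le n^2/2$.

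\emph{Reduction to one density.}
In the coupling, reconstructibility is monotone increasing in $p$, while the number of points of degree at most one is monotone \emph{decreasing} in $p$. We may assume $f(n) \le n$ (otherwise there is nothing to prove). Set $h(n) = \lceil \sqrt{\log f(n)}\, \rceil$, so $h(n) \to \infty$ and $h(n) = o(\log f(n))$ since $f(n) \to \infty$, and put
\[
    p^{*} = \frac{\log n + \log\log n - \log f(n) + h(n)}{n}, \qquad \lambda^{*} = n p^{*},
\]
so that $n e^{-\lambda^{*}} = f(n) e^{-h(n)} / \log n$. A first‑moment estimate gives $\bE\bigl[\#\{\text{points of degree} \le 1 \text{ in } G_{p^{*}}\}\bigr] = (1+o(1))(1+\lambda^{*}) f(n) e^{-h(n)} / \log n$; as $\lambda^{*} = O(\log n)$ and $f(n) e^{-h(n)} = f(n) e^{-\sqrt{\log f(n)}} = o(f(n))$, this is $o(f(n))$, so by Markov's inequality whp $G_{p^{*}}$ has fewer than $f(n)$ points of degree at most one. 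By monotonicity of that count, whp every $G_p$ with at least $f(n)$ degree‑one points has $p < p^{*}$, and hence, by monotonicity of reconstructibility, it suffices to prove that $G_{p^{*}}$ is whp not reconstructible.

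\emph{Moments at $p^{*}$.}
Let $Y$ count the undecidable points of $G_{p^{*}}$ and let $s_u$ be the number of non‑secure partners of $u$. Then $\bE[Y] = p^{*}(1-p^{*})^{n-2} \sum_u s_u \ge (1+o(1)) \tfrac12 n \lambda^{*} e^{-\lambda^{*}}$ by the deterministic input, and a short calculation (using $\log f(n) \to \infty$) shows $\bE[Y] \to \infty$; this is the only place where $f(n) \to \infty$, rather than merely being bounded below, is used. A routine second‑moment bound — conditioning on whether a potential edge $uu'$ is present and exploiting independence of edges in $\cG(n,p)$ — yields $\bE[Y^{2}] \le (1+o(1)) \bE[Y]^{2} + \bE[Y]$, so $\bP[Y = 0] \le 1/\bE[Y] + o(1) \to 0$. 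Thus whp $G_{p^{*}}$ contains an undecidable point and is not reconstructible, which by the previous step proves the theorem.

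\emph{Expected main obstacle.}
The delicate point is the calibration of $p^{*}$: it must be large enough that whp the graph already has fewer than $f(n)$ degree‑one points, yet small enough that the expected number of undecidable points still diverges, and these two demands are compatible precisely because $\log f(n) \to \infty$. The monotonicity of the ``degree at most one'' count in the coupling is what makes a single density $p^{*}$ control all the relevant graphs simultaneously; without it one would face a union bound over a window of times whose length can dwarf $e^{\Theta(f(n))}$ when $f$ grows slowly. The deterministic counting and the second‑moment estimate are otherwise straightforward.
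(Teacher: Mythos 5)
Your proposal is correct, but it takes a genuinely different route from the paper's. Both arguments rest on the same deterministic input --- at most $n^2/2$ of the $n(n-1)$ ordered pairs are secure, obtained by counting $3$-term arithmetic progressions in $V$ --- but diverge entirely afterwards. The paper conditions on the graph of revealed distances having at least $f(n)$ degree-one vertices and sequentially exposes the identities of $\lfloor f(n)/3 \rfloor$ such vertices together with their unique neighbours, using exchangeability of the labels to argue that each pair $(u_s, v_s)$ is essentially uniform over a set of at least $3n^2/4$ ordered pairs, hence secure with probability at most $2/3$; compounding gives failure probability $(2/3)^{\lfloor f(n)/3\rfloor}$. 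You instead calibrate a single density $p^{*}$ so that (i) a first-moment bound plus Markov shows whp $G_{p^{*}}$ has fewer than $f(n)$ vertices of degree at most one, whence by monotonicity of that count every graph in the process with at least $f(n)$ degree-one vertices is a subgraph of $G_{p^{*}}$, and (ii) a second-moment argument shows whp $G_{p^{*}}$ contains an undecidable vertex, so it and all its subgraphs are non-reconstructible. Your route buys a clean statement holding simultaneously over all relevant times and avoids the paper's exchangeability step (whose uniformity claim for $(u_s,v_s)$ takes some care to state precisely); the paper's route buys a failure probability exponentially small in $f(n)$ and works conditionally on any fixed graph with enough degree-one vertices, without invoking the coupling. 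Two details of your sketch deserve to be written out: the verification that $\bE[Y]\to\infty$ uniformly over $1 \ll f(n) \le n$ (the factor $\lambda^{*}/\log n$ degenerates when $f(n)$ is close to $n$, but then $f(n)e^{-h(n)}/\log n$ is polynomially large, so a two-case split on whether $f(n) \le \sqrt{n}$ suffices), and the second-moment bound, where the only correlation between ``$u$ undecidable'' and ``$u'$ undecidable'' comes from the single edge $uu'$ and contributes only $O(n\lambda^{*}e^{-2\lambda^{*}}) = o(\bE[Y])$; both check out.
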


\begin{proof}
    We may and will assume that $f(n) \leq n/100$ for all $n$. Let $G = (V, \cP)$ be the graph of known distances and suppose $G$ has at least $f(n)$ points of degree one. It suffices to show that whp there is a point $u$ whose unique edge $uv$ is such that $(u, v)$ is not secure.
    
    We first give an upper bound on the number of secure pairs of points. Let the points in $V$ be $v_1 < v_2 < \dotsb < v_n$. Fix $k \leq n/2$. There are at most $k - 1$ points $u \in \set{v_{k + 1}, \dotsc, v_n}$ for which $(u, v_k)$ is secure as the reflection of $u$ over $v_k$ must be one of $v_1, \dotsc, v_{k - 1}$. In particular, there are at most $2(k - 1)$ points $u \in V$ for which $(u, v_k)$ is secure, and so the number of secure pairs whose second vertex is in the first half of $V$ is
    \begin{equation*}
        \sum_{k \leq n/2} 2(k - 1) = 2\binom{\floor{n/2}}{2} \leq n^2/4.
    \end{equation*}
    Hence, the total number of secure pairs is at most $n^2/2$.

    Now, perform the following random process. First pick a uniformly random degree-one vertex $u_1$ of $G$ and let $v_1$ be the unique neighbour of $u_1$. For $s = 2, \dotsc, \floor{f(n)/3}$ let $u_s$ be a uniformly random degree-one vertex in $V \setminus \set{u_1, v_1, u_2, v_2, \dotsc, u_{s - 1}, v_{s - 1}}$ and let $v_s$ be the unique neighbour of $u_s$. Given $\set{u_1, v_1, u_2, v_2, \dotsc, u_{s - 1}, v_{s - 1}}$, note that $(u_s, v_s)$ is a uniformly random ordered pair from
    \begin{equation*}
        \set{(x, y) \colon x \neq y,\ x \in V \setminus \set{u_1, v_1, \dotsc, u_{s - 1}, v_{s - 1}},\ y \in V \setminus \set{u_1, u_2 \dotsc, u_{s - 1}}}.
    \end{equation*}
    This set has size at least $3n^2/4$ and so, given $\set{u_1, v_1, u_2, v_2, \dotsc, u_{s - 1}, v_{s - 1}}$, the probability that $(u_s, v_s)$ is secure is at most $(n^2/2)/(3n^2/4) = 2/3$. In particular, the probability that $(u_s, v_s)$ is secure for all $s \leq \floor{f(n)/3}$ is at most $(2/3)^{\floor{f(n)/3}} = o(1)$.
\end{proof}

Now, if $p \leq (\log n + \log\log n - \omega(1))/n$, then whp $\cG(n, p)$ has arbitrarily many degree-one vertices (see, for example, \cite[Ch.~7]{Bollobas}) and so \cref{thm:linear}\ref{part:notall} follows.

\section{Open problems}\label{sec:openproblems}

\Cref{thm:linear} shows that $1/n$ is a weak threshold for a linear sized reconstructible set of vertices. It would be interesting to determine whether there is a sharp threshold and it is natural to conjecture that this occurs at $1/n$ as this coincides with the appearance of the giant component in $\cG(n, p)$.

\begin{conjecture}
    Let $V$ be a set of $n$ points on the real line. Suppose the graph of known pairwise distances $(V, \cP)$ is distributed as $\cG(n, (1 + \epsilon)/n)$ where $\epsilon > 0$ is a constant. Then whp there is a reconstructible set of size $\Omega_{\epsilon}(n)$.
\end{conjecture}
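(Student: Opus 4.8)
\emph{A proposed approach.} The plan is to prove the conjecture by reconstructing a constant fraction of the $2$-core of the giant component; note that the short-cycle method behind \cref{thm:linear}\ref{part:linear} cannot reach $(1 + \epsilon)/n$, since at this density almost no pair of points lies on a short cycle. Write $\mathcal{C}$ for the $2$-core of the giant component of $\cG(n, (1 + \epsilon)/n)$; by standard facts about the supercritical random graph (see~\cite{Bollobas}), $\mathcal{C}$ has $\Omega_{\epsilon}(n)$ vertices whp, so it suffices to reconstruct a positive fraction of $V(\mathcal{C})$. The main tool would be the following extension of the discussion around \eqref{eq:dk}--\eqref{eq:lindep}: if a graph $H$ embedded in $\bR$ has an ear decomposition $H = C_0 \cup P_1 \cup \dotsb \cup P_r$ in which no ear has a nonzero $\set{-1, 0, 1}$-vector orthogonal to its sequence of edge lengths (reading the initial cycle $C_0$ as in \eqref{eq:dk}, with one edge designated as the closing edge), then $V(H)$ is reconstructible: $C_0$ is reconstructible by the argument preceding \eqref{eq:lindep}, and once $C_0 \cup P_1 \cup \dotsb \cup P_{i - 1}$ has been reconstructed, the signs along $P_i$ are forced by its edge lengths together with the now-known displacement between its endpoints, so the internal vertices of $P_i$ are located. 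It therefore suffices to find a $2$-edge-connected $\mathcal{D} \subseteq \mathcal{C}$ on $\Omega_{\epsilon}(n)$ vertices that admits such an ear decomposition.

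I would then use the structure of $\mathcal{C}$: it is obtained from its kernel $\mathcal{K}$ — the multigraph on the vertices of degree at least three in $\mathcal{C}$, with the maximal degree-two paths contracted to edges — by subdividing each edge, and $\mathcal{K}$ behaves like a random multigraph on $\Theta_{\epsilon}(n)$ vertices of degree at least three, so whp it contains a $2$-edge-connected block on $\Omega_{\epsilon}(n)$ vertices. The maximal degree-two paths of $\mathcal{C}$ have lengths with exponential tails and mean $O_{\epsilon}(1)$, so for a suitable constant $\delta = \delta(\epsilon) > 0$ those of length at least $\delta \log n$ contain only $o(n)$ vertices in total; and a first-moment computation modelled on \cref{lem:unrecon} shows that, as long as $\delta \log(3(1 + \epsilon)) < 1$, the paths of length at most $\delta \log n$ carrying a $\set{-1, 0, 1}$-dependency among their edge lengths also span only $o(n)$ vertices in total. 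Deleting from $\mathcal{K}$ the edges corresponding to these ``long'' and ``bad'' paths loses only $o(n)$ vertices and whp leaves a $2$-edge-connected block on $\Omega_{\epsilon}(n)$ vertices; reinstating the subdivisions yields a $2$-edge-connected $\mathcal{D} \subseteq \mathcal{C}$ with $\Omega_{\epsilon}(n)$ vertices in which every maximal degree-two path is short and dependency-free, and whp $\mathcal{D}$ contains a cycle of bounded length, which whp is dependency-free and can serve as the base $C_0$.

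The main obstacle is the final assembly, and I expect this is where the real difficulty lies. Any ear decomposition of $\mathcal{D}$ must have ears that are concatenations of several maximal degree-two paths of $\mathcal{D}$ — a decomposition using one such path per ear would amount to a Hamilton cycle in $\mathcal{D}$, which is not available — and ``no $\set{-1, 0, 1}$-dependency among the edge lengths'' is not preserved under concatenation, as such a dependency can straddle several individually good segments. (The same phenomenon reappears in the pruning above: deleting bad paths can merge surviving ones into fresh long or bad paths, so some care is needed to prevent a cascade.) Getting past this would seem to need one of: a more robust local criterion closed under concatenation; a direct global analysis of the maps $f \colon V(\mathcal{D}) \to \bR$ with $f(u) - f(v) \in \set{0, \pm 2\abs{u - v}}$ for every edge $uv$ (non-reconstructibility of $V(\mathcal{D})$ being equivalent to the existence of a non-constant such $f$), controlling the cuts and level sets it would induce; or a sharper, structure-aware count of dependencies for the $O_{\epsilon}(\log n)$-length ears that genuinely arise.
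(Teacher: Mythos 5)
First, a point of orientation: this statement is a \emph{conjecture} in \cref{sec:openproblems} --- the paper offers no proof of it, and the authors explicitly note that their short-cycle method cannot be pushed below $9/n$. So there is no proof of the authors' to compare yours against; the only question is whether your proposal closes the problem. It does not, and to your credit you have located exactly where it fails.

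The genuine gap is the assembly step. Your local criterion for an ear with edge lengths $d_1, \dotsc, d_m$ --- no nonzero $(\gamma_1, \dotsc, \gamma_m) \in \set{-1, 0, 1}^m$ with $\sum_j \gamma_j d_j = 0$ --- is indeed what is needed for the internal vertices of the ear to be forced once the displacement between its endpoints is known, and the first-moment count modelled on \cref{lem:unrecon} does control the vertices lying on individually bad maximal degree-two paths. But, as you observe, an ear consisting of a single maximal degree-two path has both of its kernel endpoints already placed, so such ears can never bring a \emph{new} kernel vertex into the reconstructed part; every new kernel vertex must enter via an ear that concatenates at least two subdivision paths, and dependency-freeness is not closed under concatenation. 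Worse, early in the decomposition the built part is small, so these ears have kernel-length comparable to the diameter of the kernel, i.e.\ $\Theta_{\epsilon}(\log n)$ kernel edges and hence up to $\Theta_{\epsilon}(\log^2 n)$ edges of $\mathcal{C}$; at that length the failure bound of the form $3^{\ell}/n$ is vacuous, and a union bound over all candidate ears of that length is hopeless since their number is superpolynomial. One cannot fix the decomposition in advance either, because which ears arise depends on the same randomness one wants to condition on. Until you supply a concatenation-closed local criterion, or carry out the global analysis of the maps $f$ with $f(u) - f(v) \in \set{0, \pm 2\abs{u - v}}$ on every edge, the argument does not go through --- and that analysis is essentially the open problem itself.

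Two secondary points that would also need attention even granting the above: the existence of a $2$-edge-connected block of the kernel on $\Omega_{\epsilon}(n)$ vertices at density $(1+\epsilon)/n$ needs a proof or reference, and deleting the $o(n)$ ``long'' and ``bad'' kernel edges can in principle shatter $2$-edge-connectivity, so the surviving block must be shown to remain linear; in this sparse regime neither is automatic. In short: the reduction to the $2$-core and the per-ear rigidity criterion are sensible and consistent with the ideas around \eqref{eq:dk}--\eqref{eq:lindep} and \cref{thm:linear}\ref{part:linear}, but the proposal is a programme with an honest, unresolved core difficulty, not a proof.
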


We have not tried to optimize the constant in \cref{thm:linear}\ref{part:linear}, but our approach will not give a bound better than $9/n$, as for smaller $p$ there are too few pairs in sufficiently short cycles. A bootstrapping argument might push the constant below $9$, but would not get down to $p = (1 + \epsilon)/n$, as most pairs will only be in cycles of length at least $\Omega(\log(n)/\epsilon)$.

It would also be interesting to give a characterisation of the giant reconstructible component. For global graph rigidity in one-dimension the largest reconstructible component is the largest 2-connected subgraph. For graph rigidity in two-dimensions the threshold for the emergence of a giant reconstructible component was determined in~\cite{KMT2011} and a characterisation of this component was determined in~\cite{BLM2018}.

Reconstructibility in dimensions greater than one is also interesting. Reconstructing the whole of $V$ is too much to ask: consider an embedding where $n - 2$ points lie in a $(d - 1)$-dimensional hyperplane and the other two points $u, v$ do not. $V$ can only be fully reconstructed if the distance between $u$ and $v$ is revealed (otherwise $u, v$ could be on the same side of the hyperplane or on opposite sides). However, it is interesting to ask for the threshold at which a linear sized subset of $V$ can be reconstructed. Is $1/n$ a weak threshold as it is for $d = 1$?

Finally, Benjamini and Tzalik~\cite{BT2022} also considered an even stronger notion of reconstructibility. Let $G = (V, E)$ be a graph. We say a subset $U \subseteq V$ is \defn{adversarially reconstructible in $\bR$} if, for \emph{every} embedding of $V$ in $\bR$, $U$ is reconstructible from the distances $\abs{u - v}$  for $uv \in E$. Note this is similar to the definition of global rigidity with the generic condition removed. It would be interesting to determine the thresholds for $\cG(n, p)$ to be adversarially reconstructible in $\bR$ and for some linear sized subset of $\cG(n, p)$ to be adversarially reconstructible. In contrast to \cref{thm:linear}, minimum degree at least two is necessary as the embedding can be chosen so there are no secure pairs. Benjamini and Tzalik conjectured it is also sufficient: when distances are revealed one-by-one in a random order, the graph becomes adversarially reconstructible exactly at the first time that it has minimum degree two.

{
\fontsize{11pt}{12pt}
\selectfont
	
\hypersetup{linkcolor={red!70!black}}
\setlength{\parskip}{2pt plus 0.3ex minus 0.3ex}

}

\end{document}